\newcommand{\real}{\mathbb{R}}
\newcommand{\n}{\mathbb{N}}
\newcommand{\rN}{ {\mathbb{R}^N} }
\numberwithin{equation}{section}
\newtheorem{teo}{Theorem}[section]
\newtheorem{lem}[teo]{Lemma}
\newtheorem{rmk}[teo]{Remark}
\newtheorem{defin}[teo]{Definition}
\newtheorem{corol}[teo]{Corollary}
\newtheorem{prop}[teo]{Proposition}
\begin{document}

\title{\bf\Large Symmetry properties of positive solutions for fully nonlinear elliptic systems}
\author[1]{Ederson Moreira dos Santos\footnote{ederson@icmc.usp.br.}}
\author[1]{Gabrielle Nornberg\footnote{gabrielle@icmc.usp.br.}}
\affil[1]{\small Instituto de Ciências Matemáticas e de Computação, Universidade de São Paulo, Brazil}
\date{}

\maketitle

{\small\noindent{\bf{Abstract.}} We investigate symmetry properties of positive  solutions for fully nonlinear uniformly elliptic systems, such as
$$
F_i \,(x,Du_i,D^2u_i) +f_i \,(x,u_1, \ldots , u_n,Du_i)=0, \;\; 1 \leq i \leq n,
$$
in a bounded domain $\Omega$ in $\mathbb{R}^N$ with Dirichlet boundary condition $u_1=\ldots,u_n=0$ on $\partial\Omega$. Here, $f_i $'s are nonincreasing with the radius $r=|x|$, and satisfy a cooperativity assumption. In addition, each $f_i $ is the sum of a locally Lipschitz with a nondecreasing function in the variable $u_i$, and may have superlinear gradient growth. 
We show that symmetry occurs for systems with nondifferentiable $f_i$'s by developing a unified treatment of the classical moving planes method in the spirit of Gidas-Ni-Nirenberg.
We also present different applications of our results, including uniqueness of positive solutions for Lane-Emden systems in the subcritical case in a ball, and symmetry for a class of systems with natural growth in the gradient. 
}
\smallskip

{\small\noindent{\bf{Keywords.}} {Positive solution; Elliptic system of fully nonlinear equations; Symmetry; Moving planes.}

\smallskip

{\small\noindent{\bf{MSC2010.}} {35J47, 35J60, 35N25, 35B06, 35B50.}

\section{Introduction and main results}\label{Introduction}
 
In this paper we study radial symmetry of solutions for fully nonlinear uniformly elliptic systems in the following form
\begin{align}\label{P} \tag{P}
\left\{
\begin{array}{rclcl}
F_i \,(x,Du_i,D^2u_i) +f_i \,(x,u_1, \ldots , u_n,Du_i)&=&0 &\mbox{in} & \;\Omega \\
u_i&>& 0 &\mbox{in} & \;\Omega \\
u_i&=& 0 &\mbox{on} & \partial\Omega, \;\;\, 1 \leq i \leq n,
\end{array}
\right.
\end{align}
where $\Omega$ is a bounded $C^{2}$ domain in $\rN$, $N\geq 1$, $n\geq 1$, and $f_i$ is not necessarily locally Lipschitz. 

Symmetry properties of partial differential equations, for general, are of independent interest since it always makes sense to ask whether or not solutions inherit the same symmetry from the differential operator and from the domain of definition.
On the other hand, special attention has been devoted to radial solutions of specific problems. For instance, the Lane-Emden conjecture has been fully solved in the radial setting \cite[Theorem 3.1]{Mitidieri}, and the uniqueness of positive radial solutions for Lane-Emden systems was proved in \cite[Theorem 1.1(i)]{Dalmasso}. In particular, in these specific cases, it should be natural to reach the full result via symmetry. We adopt this procedure to prove the uniqueness of positive solution to Lane-Emden systems; cf.\ Corollary \ref{uniqLE} ahead.

Radial symmetry has been extensively studied in the literature since the seminal works of Serrin \cite{Serrin} and Gidas-Ni-Nirenberg \cite{GNiN}, which are based upon the \textit{moving planes} method.
The method was revisited in the influential paper \cite{BN} of Berestycki-Nirenberg, in which a central tool to start moving the planes is the maximum principle in small domains. This permitted the authors to remove the original twice differentiability assumption up to the boundary on solutions (and the regularity of the boundary), although they imposed a locally Lipschitz condition which must be true even when $u=0$. 
In the recent years, a lot of variants were considered. For example, assuming differentiability on the $f_i$'s, in \cite{DPacella} the authors considered a different type of symmetry, namely foliated Schwarz, related to solutions having low Morse index. On the non-Lipschitz scenario, it was performed in \cite{DF} a local moving plane method followed by a unique continuation principle. 

The generalization of the pioneering  method for systems was first considered in \cite{Troy}. 
The respective version of \cite{BN} for cooperative systems in the differentiable case can be found in \cite{DjairoSim}, and in the most works that have picked out this approach since then.
It is not our intention here to give a full literature review. Instead we quote other few papers as \cite{BLP, BdaLio, Dancer, Li91, Serra}, and references therein, 
in which a more clear scenario can be built related to both equations and systems.

On the other hand, in contrast to the scalar case, several prototype problems involving systems in the superlinear setting are naturally not differentiable or even not Lipschitz; e.g.\ \eqref{LE} and \eqref{eq.terracini.geral}.
Up to our knowledge, symmetry results for systems without differentiability hypotheses on $f_i$ are not properly available.
The results in this paper include these cases and provide a better understanding about symmetry properties for systems by featuring the essence of Gidas-Ni-Nirenberg technique. 
Consequently, we clarify some divergences that appeared in the literature over the past years since the work \cite{Shaker}; see \eqref{conta f} where the cooperativity assumption ($H_4$) is used. We focus on relaxing Lipschitz or differentiability hypotheses on $f_i$, even under the price of asking natural regularity on the solutions $u_i$ and on the domain.

Next we list our hypotheses.
First and foremost, a consistent requirement over equations when dealing with radial symmetry is their rotational invariance property.
For our operators $F_i$, this will be expressed in terms of an exclusive dependence on the eigenvalues of $D^2 u$ and on the lengths of $Du$ and $x$. 
We assume, as in \cite{BSoverdet}, that $\mathcal{F}_i:\overline{\Omega}\times\real^n\times\rN \times\mathbb{S}^N(\mathbb{R})\rightarrow\real$, where $\mathcal{F}_i(x,\textbf{u},p,X) = F_i(x,p,X) + f_i(x,\textbf{u},p)$, with $\textbf{u}=(u_1, \ldots, u_n)$, $1 \leq i \leq n$, satisfy the following invariance
\smallskip
\begin{itemize}
\item[($H_0$)] $\mathcal{F}_i\,(x,\textbf{u},Qp,Q^t X Q) = \mathcal{F}_i\, (x,\textbf{u},p,X)$ for all $N \times N$ orthogonal matrix $Q$, and for all $x,\textbf{u},p,X$. 
\end{itemize}
\smallskip
Here $\mathbb{S}^N(\real)$ is the subspace of symmetric matrices of order $N$ with real entrances. 
Concerning structure, to apply the moving planes method, as in \cite{BdaLio}, we consider on each $F_i$ the following condition,

\begin{itemize}
\item[($H_1$)]
$\mathcal{M}_i^- (X-Y)-\gamma |p-q| \leq F_i\, (x,p,X) - F_i\, (x,q,Y) \leq \mathcal{M}^+_i (X-Y)+\gamma |p-q|$, for all $X,Y\in \mathbb{S}^N(\real)$, $p,q \in \rN$, and $x\in\Omega$, with $F_i(\cdot,0,0)\equiv 0$,
\end{itemize} 
\smallskip
where $\mathcal{M}_i^\pm$ are the Pucci's extremal operators; see  Section~\ref{Preliminaries}. 
In addition, in order to treat possible superlinear growth in the gradient, we assume that for any $\sigma >0$, there exists $\mu_\sigma \geq 0$ such that
\begin{itemize}
\item[($H_2$)]
$| f_i\, (x,\textbf{u},p) - f_i\, (x,\textbf{u},q) |\leq \mu_\sigma |p-q|\,$ for $p,q\in \overline{B}_\sigma$, $x\in \Omega$ and $\textbf{u}=(u_1,\ldots,u_n)\in \overline{B}_\sigma $.
\end{itemize} 
For instance, we refer to \cite{KoikePerron} where a hypothesis of this nature appears to treat equations with quadratic gradient growth. 
Moreover, for each $i\in \{1,\ldots, n\}$ we assume
\smallskip
\begin{itemize}
\item[($H_3$)] $F_i$ and $f_i$ are radially symmetric, and $F_i+f_i$ is nonincreasing with $r = |x|$, for each fixed $ \textbf{u},p,X$;\smallskip
\item[($H_4$)] $f_i=f_{i,1}+f_{i,2}$, where $f_{i,1}$ is uniformly locally Lipschitz in the component $u_i$, and $f_{i,2}$ is nondecreasing in $u_i$, whenever the remaining components $u_j$, for $j\neq i$, and $x,p$ are fixed;\smallskip
\item[($H_5$)] $ u_j\mapsto f_i \,(x,u_1,\ldots, u_n,p)$ is nondecreasing if $i\neq j$, and $x$, $p$, and $u_l$, for $l\neq j$, are fixed.
\end{itemize}
\smallskip

The latter represents cooperativity for the system \eqref{P}, which ensures the validity of the strong maximum principle; compare with \cite[eq.\ (1.3)]{Troy} in the differentiable setting, and \cite[eq.\ (1.2)]{DM} in the linear one. Besides sufficient, this condition is also necessary in order to preserve the symmetry; see Section~\ref{secao aplicacoes}.

We use the shorthand notation \eqref{H} for the preceding hypotheses, that is,
\begin{align}\label{H}\tag{$H$}
(H_0), \;(H_1), \;(H_2), \; (H_3), \; (H_4),\textrm{ and } (H_5).
\end{align}

In what follows, we say solutions to mean the classical solutions of the Dirichlet problem \eqref{P}, being twice differentiable up to the boundary. We stress that this is always the case if, for instance, each $f_i(x,\textbf{u}(x),Du_i(x))$ is $ C^\alpha (\overline{\Omega}) $ as a function of $x$, and $F_i$ is concave or convex in the $X$ entry (in particular the Pucci's operators); see \cite{CafCab} and \cite{Winter}.
On the other hand, viscosity solutions are understood in the $C$-viscosity sense; see the next section. 

We state our  main results in the sequel. 
\begin{teo}\label{Tbola}
Let $\Omega$ be a ball of radius $R$ centered at the origin.  Assume \eqref{H} for all $1\leq i\leq n$.
Let $(u_1, \ldots , u_n)$  be a solution of \eqref{P}, where $u_i$'s are  $C^2(\overline{\Omega})$ functions. Then $u_i$ is radially symmetric and $\partial_r u_i <0$, for all $r=|x|\in (0,R)$, $1\leq i\leq n$. 
\end{teo}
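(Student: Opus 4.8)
The plan is to adapt the classical Gidas--Ni--Nirenberg moving planes argument to the system \eqref{P}, moving all the hyperplanes simultaneously and exploiting the cooperativity hypothesis $(H_5)$ to couple the components. Fix a direction, say $e_1$, and for $\lambda \in (0,R)$ set $T_\lambda = \{x_1 = \lambda\}$, $\Sigma_\lambda = \{x \in \Omega : x_1 > \lambda\}$, and let $x^\lambda = (2\lambda - x_1, x_2, \ldots, x_N)$ denote the reflection across $T_\lambda$. For each $i$, define $v_i^\lambda(x) = u_i(x^\lambda)$ and the difference $w_i^\lambda = v_i^\lambda - u_i$ on $\Sigma_\lambda$. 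Using the rotational invariance $(H_0)$ together with $(H_1)$ and $(H_2)$, one shows that $w_i^\lambda$ satisfies a differential inequality of the form $\mathcal{M}_i^-(D^2 w_i^\lambda) - \tilde\gamma\,|D w_i^\lambda| + c_i(x)\, w_i^\lambda + \sum_{j \neq i} d_{ij}(x)\, w_j^\lambda \leq 0$ in $\Sigma_\lambda$, where the coupling coefficients $d_{ij}$ are nonnegative by $(H_5)$, the monotonicity in $r$ from $(H_3)$ gives the correct sign of the zeroth order contribution coming from the $x$-dependence, and the decomposition $(H_4)$ lets one bound the ``bad'' sign part of $c_i$: the locally Lipschitz piece $f_{i,1}$ contributes a bounded $c_i$, while the nondecreasing piece $f_{i,2}$ gives a favorable sign wherever $w_i^\lambda < 0$. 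The goal is then to prove $w_i^\lambda \geq 0$ in $\Sigma_\lambda$ for all $\lambda \in (0,R)$ and all $i$.

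The argument proceeds in the standard two stages. First, for $\lambda$ close to $R$ the domain $\Sigma_\lambda$ has small measure, so the maximum principle in small domains for cooperative systems (the system version of the Berestycki--Nirenberg tool, which should be available from the preliminaries since $(H_1)$--$(H_2)$ are tailored to it) yields $w_i^\lambda \geq 0$. Second, one defines $\lambda_0 = \inf\{\lambda : w_i^\mu \geq 0 \text{ in } \Sigma_\mu \text{ for all } \mu \geq \lambda,\ 1 \leq i \leq n\}$ and argues by contradiction that $\lambda_0 = 0$. If $\lambda_0 > 0$, by continuity $w_i^{\lambda_0} \geq 0$; the strong maximum principle for cooperative systems (valid thanks to $(H_5)$, and here is where the decomposition $(H_4)$ is essential to handle the non-Lipschitz $f_{i,2}$ on the set where $w_i^{\lambda_0}=0$, using that $f_{i,2}$ is merely monotone) forces each $w_i^{\lambda_0} > 0$ in $\Sigma_{\lambda_0}$, together with a Hopf-type lemma on $T_{\lambda_0}$. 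One then slides $\lambda$ slightly below $\lambda_0$: split $\Sigma_{\lambda}$ into a compact interior piece, where $w_i^{\lambda_0} > 0$ uniformly and hence $w_i^\lambda > 0$ by continuity, and a thin remaining piece near $\partial\Sigma_\lambda$ of small measure, where the small-domain maximum principle applies again; this yields $w_i^\lambda \geq 0$ for $\lambda$ slightly below $\lambda_0$, contradicting minimality. Hence $\lambda_0 = 0$, i.e.\ $u_i(x^0) \geq u_i(x)$ for $x_1 > 0$; repeating with $-e_1$ gives equality across $\{x_1=0\}$, and since $e_1$ is arbitrary, each $u_i$ is radially symmetric. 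The strict monotonicity $\partial_r u_i < 0$ on $(0,R)$ comes from applying the strong maximum principle / Hopf lemma to $w_i^\lambda > 0$ for every $\lambda \in (0,R)$.

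I expect the main obstacle to be the careful derivation, under only $(H_1)$--$(H_5)$ and without differentiability of the $f_i$, of the linear differential inequality satisfied by $w_i^\lambda$ with coefficients having the right signs and the right integrability/boundedness for the small-domain and strong maximum principles to apply — in particular, handling the possibly superlinear gradient term (this is precisely why $(H_2)$ is stated with a modulus $\mu_\sigma$ depending on an a priori bound $\sigma$ for $\|u_i\|_\infty$ and $\|Du_i\|_\infty$, so the relevant gradients stay in a fixed ball $\overline{B}_\sigma$ and the gradient coefficient is genuinely bounded on the region of interest), and correctly absorbing the non-Lipschitz part $f_{i,2}$ via its monotonicity rather than a Lipschitz estimate. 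The cooperativity computation indicated by the reference to \eqref{conta f} and $(H_4)$ in the introduction is the delicate bookkeeping step that makes the coupled maximum principles go through; getting the signs consistent there — and confirming that it is $(H_5)$, not a stronger differentiability assumption, that is actually needed — is the heart of the proof.
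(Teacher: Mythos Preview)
Your proposal follows the Berestycki--Nirenberg scheme (maximum principle in small domains to start the planes, and the compact-set/small-residual split to continue them), whereas the paper deliberately avoids that route and uses the original Gidas--Ni--Nirenberg strategy instead. The paper says so explicitly in Section~\ref{Preliminaries}: the small-domain maximum principle ``requires a Lipschitz condition on $f_i$ which we are not assuming''. This is not a matter of taste; it is precisely where your sketch has a gap.

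Concretely, the inequality you derive for $w_i^\lambda$ carries a diagonal coefficient coming from $f_{i,1}+f_{i,2}$ and off-diagonal coefficients $d_{ij}$ coming from the monotonicity $(H_5)$. You correctly observe that the $f_{i,2}$-contribution to the diagonal has a favourable sign on $\{w_i^\lambda<0\}$. But the off-diagonal coefficients $d_{ij}(x)=\{f_i(\ldots,u_j^\lambda,\ldots)-f_i(\ldots,u_j,\ldots)\}/w_j^\lambda$ are only \emph{nonnegative}, not bounded; under $(H_5)$ alone nothing prevents $d_{ij}$ from blowing up, and in the prototype \eqref{LE} with $q<1$ one has $d_{12}\sim v^{q-1}\to\infty$ near $\partial\Omega$, exactly where the small-domain argument is supposed to operate. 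On the set where some $w_j^\lambda<0$ the cross term $d_{ij}w_j^\lambda$ is then large negative, and the ABP-type estimate underlying the small-domain principle does not close. The same obstruction reappears in your Step~2, since the ``thin remaining piece near $\partial\Sigma_\lambda$'' is again a boundary neighbourhood. So the tool you invoke in both stages is not available under \eqref{H}.

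The paper circumvents this by exploiting the $C^2(\overline{\Omega})$ regularity of the $u_i$ and the $C^2$ regularity of $\partial\Omega$. To start (Step~1 of Proposition~\ref{th elipse}), it shows directly that $\partial_{x_1}u_i<0$ in a boundary neighbourhood: in the case $f_i(\cdot,0,\ldots,0)\ge 0$ via Hopf applied to $u_i$ itself, and otherwise by a second-order boundary computation on $D^2u_i(x_0)$ in the spirit of \cite{GNiN,Troy}. To stop (Step~2), there is no small-domain/compact split: one uses Lemma~\ref{SMP e Hopf sigma lambda} (which only needs $U^\lambda,V^\lambda\ge 0$ already established, so the monotone pieces can be \emph{dropped} with the correct sign, cf.\ \eqref{conta f}) to get $U^\Lambda,V^\Lambda>0$ and $\partial_{x_1}u_i<0$ on $T_\Lambda$, extends the latter to $T_{\Lambda-\varepsilon}$ by the same boundary argument, and then reaches a contradiction through sequences and the mean value theorem. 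In short, your plan would work if all $f_i$ were locally Lipschitz in every $u_j$; under the weaker \eqref{H} you need the GNN route the paper takes.
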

 
If the operators $F_i$,  $1 \leq i \leq n$, are continuously differentiable in the entry $X \in \mathbb{S}^N(\mathbb{R})$, it is derived the following result concerning overdetermined problems in general smooth domains for our type of systems. 
Consistently with the classical notation \cite{Serrin, BSoverdet}, the involved functions do not depend on $x$.

\begin{teo} \label{Toverd}
Let $\Omega$ be a bounded $C^{2}$ domain.
Assume that $F_i\,(p,X)$ is continuously differentiable in $X$, and that $f_i$ and $F_i$ verify \eqref{H} without dependence on $x$, for all $1\leq i\leq n$.
Let $(u_1, \ldots , u_n)$  be a solution of \eqref{P}, where $u_i$'s are  $C^2(\overline{\Omega})$ functions. Assume that $\partial_{\nu} u_i=c_i$ on $\partial\Omega$, where $c_i$ is a constant and $\nu$ is the unit interior normal to $\partial\Omega$. 
Then $\Omega$ must be a ball, $u_i$ is radially symmetric and $\partial_r u_i <0$, for all $1\leq i\leq n$.  
\end{teo}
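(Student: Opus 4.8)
The plan is to run Serrin's moving planes argument in an arbitrary direction and to read off, from the overdetermined condition $\partial_\nu u_i=c_i$, that the planes can always be pushed up to a hyperplane of symmetry of $\Omega$; since the direction is arbitrary, this forces $\Omega$ to be a ball and each $u_i$ to be radial. Fix a unit vector $e\in\rN$ and, for $\lambda$ below $\lambda_0:=\max_{\overline{\Omega}}x\cdot e$, set $\Sigma_\lambda=\{x\in\Omega:\,x\cdot e>\lambda\}$, let $x^\lambda$ be the reflection of $x$ across $T_\lambda=\{x\cdot e=\lambda\}$, and put $w_i^\lambda(x)=u_i(x^\lambda)-u_i(x)$ on $\Sigma_\lambda$. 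Since here $F_i$ and $f_i$ do not depend on $x$, condition $(H_3)$ is automatic and the ``monotonicity in $r$'' term that normally enters the comparison is simply absent; hence the computation behind $(H_1)$, $(H_4)$ and $(H_5)$ in the proof of Theorem~\ref{Tbola} shows verbatim that each $w_i^\lambda$ is, in $\Sigma_\lambda$, a supersolution of a scalar linear uniformly elliptic inequality whose zeroth order term is controlled through cooperativity, and that $w_i^\lambda=0$ on $\partial\Sigma_\lambda\cap T_\lambda$. The maximum principle in narrow domains then starts the procedure ($w_i^\lambda\ge 0$ in $\Sigma_\lambda$ for $\lambda_0-\lambda$ small, all $i$), and the strong maximum principle for cooperative systems propagates it.

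Let $\lambda_*=\inf\{\mu:\,w_i^\lambda\ge 0\text{ in }\Sigma_\lambda\text{ for all }\lambda\in(\mu,\lambda_0)\text{ and all }i\}$. By continuity $w_i^{\lambda_*}\ge 0$ in $\Sigma_{\lambda_*}$ for every $i$, and the strong maximum principle together with cooperativity gives the dichotomy: either $w_i^{\lambda_*}\equiv 0$ for all $i$, so that $\Omega$ and each $u_i$ are symmetric with respect to $T_{\lambda_*}$ and we are done for the direction $e$; or $w_i^{\lambda_*}>0$ in $\Sigma_{\lambda_*}$ for all $i$. Assume the latter. As in Serrin's argument, the moving plane can fail to advance past $\lambda_*$ only for one of two geometric reasons: (a) $\partial\Sigma_{\lambda_*}$ is internally tangent to $\partial\Omega$ at some $P\notin T_{\lambda_*}$; or (b) $T_{\lambda_*}$ meets $\partial\Omega$ orthogonally at some point $Q$.

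In case (a), Hopf's boundary lemma applied to $w_i^{\lambda_*}$ at $P$ gives $\partial_\nu w_i^{\lambda_*}(P)>0$; but $u_i=u_i(\cdot^{\lambda_*})=0$ on $\partial\Omega$ and, by the overdetermined condition together with internal tangency, $\partial_\nu u_i(P)=\partial_\nu u_i(P^{\lambda_*})=c_i$ with matching normals, so $\partial_\nu w_i^{\lambda_*}(P)=0$, a contradiction. In case (b), the same conclusion follows from Serrin's corner lemma at $Q$; this is exactly where the $C^1$ dependence of $F_i$ on $X$ is used, for it lets one write $F_i(p,X)-F_i(p,Y)=\mathrm{tr}(A(x)(X-Y))$ with $A(x)$ continuous and uniformly elliptic by $(H_1)$, so that near $Q$ the function $w_i^{\lambda_*}$ is a supersolution of a genuine linear uniformly elliptic operator with bounded coefficients, to which the corner lemma applies; combined with $\partial_\nu u_i\equiv c_i$ and $u_i\equiv u_i(\cdot^{\lambda_*})\equiv 0$ on $\partial\Omega$, it again produces a contradiction. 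Hence $w_i^{\lambda_*}>0$ is impossible, so $w_i^{\lambda_*}\equiv 0$ and $\Omega$, each $u_i$, are symmetric about $T_{\lambda_*}$. Moreover $w_i^\lambda>0$ in $\Sigma_\lambda$ for every $\lambda\in(\lambda_*,\lambda_0)$ (an identity $w_i^\lambda\equiv 0$ there would, with the symmetry about $T_{\lambda_*}$, make a nontrivial translation preserve $\Omega$), and Hopf's lemma on $T_\lambda\cap\Omega$ yields $\partial_e u_i<0$ there, hence on all of $\{x\in\Omega:\,x\cdot e>\lambda_*\}$. Since $e$ was arbitrary, $\Omega$ is a bounded domain symmetric about a hyperplane in each direction; translating the origin to the point where the symmetry planes of the coordinate directions meet, $\Omega$ becomes symmetric about the origin, hence (otherwise a nontrivial translation would preserve it) about every hyperplane through the origin, so $\Omega=B_R(0)$. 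Then every $u_i$ is symmetric about every hyperplane through the origin, i.e.\ radial, and choosing $e=x/|x|$ in the monotonicity just proved gives $\partial_r u_i<0$ for $r\in(0,R)$.

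The delicate point is case (b): one needs a Serrin–type corner (or edge) lemma in the present framework. After the linearization above, $w_i^{\lambda_*}$ satisfies near $Q$ an inequality of the form $\mathrm{tr}(A(x)D^2 w_i^{\lambda_*})+b(x)\cdot Dw_i^{\lambda_*}+\tilde c_i(x)\,w_i^{\lambda_*}\le 0$ with $A$ uniformly elliptic and $b,\tilde c_i$ bounded, where the first-order coefficient absorbs the gradient dependence of $f_i$ via $(H_2)$ and the zeroth order one comes from the decomposition $(H_4)$, the off-diagonal couplings being discarded since they have the favourable sign by $(H_5)$; using $w_i^{\lambda_*}\ge 0$ one may replace $\tilde c_i$ by $-\tilde c_i^-\le 0$, and then the classical corner lemma — proved through an explicit barrier at the corner $Q$ formed by the $C^2$ surfaces $\partial\Omega$ and $T_{\lambda_*}$ — provides the strict sign of a suitable directional derivative of $w_i^{\lambda_*}$ entering $\Omega$ at $Q$, which conflicts with $\partial_\nu u_i\equiv c_i$ and the vanishing of the tangential derivatives of $u_i$ on $\partial\Omega$. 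The coupling through the system causes no extra trouble, since cooperativity lets each component be treated as a scalar supersolution; all system effects are already absorbed in the bulk step inherited from Theorem~\ref{Tbola}.
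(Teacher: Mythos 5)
Your overall plan --- moving planes in an arbitrary direction, distinguishing Serrin's two critical configurations (internal tangency and orthogonality), treating the first with Hopf plus the overdetermined Neumann datum and the second with the corner lemma, and then deducing that $\Omega$ is a ball because it is symmetric in every direction --- is the same as the paper's. The substantial gap is in how you start the planes. You invoke the maximum principle in narrow domains and claim that each $w_i^\lambda$ is a supersolution of a \emph{scalar} linear uniformly elliptic inequality. But the computation behind this (the paper's \eqref{conta f}) produces such a scalar inequality only once $w_j^\lambda\ge0$ is already known for every $j$: the nondecreasing piece $f_{i,2}$ and the off-diagonal cross term $c_{ij}\,w_j^\lambda$ are discarded precisely because they are $\ge0$ when $w_j^\lambda\ge0$. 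When some $w_j^\lambda<0$ these terms have the wrong sign and, since $f_i$ is not assumed Lipschitz in those entries, they cannot be bounded below by a multiple of $-|w_j^\lambda|$; the linearized coefficients $c_{ij}$ may blow up near the zero set of the solutions (think of $f(u,v)=v^q$ with $q<1$), so the narrow-domain/ABP argument breaks down. This is exactly the obstruction discussed in Section~\ref{Preliminaries} and the reason the authors reject the Berestycki--Nirenberg route. The correct starting step is Step~1 of the proof of Proposition~\ref{th elipse}: a Gidas--Ni--Nirenberg boundary argument which uses the $C^2(\overline\Omega)$ regularity of the $u_i$, the $C^2$ regularity of $\partial\Omega$, and the sign alternative in $(H_7)$ (trivially satisfied here since $f_i(0,\dots,0)$ is a constant) to show directly that $\partial_e u_i<0$ near the cap $\partial\Omega\cap\{x\cdot e=R\}$. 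That is what starts the planes without any Lipschitz hypothesis, and your proposal should replace the narrow-domain step by this argument, as the paper does when it says to run ``exactly the same proof of Proposition~\ref{th elipse}, replacing $0$ by $\Lambda_1$.''

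Beyond this, a few smaller imprecisions. The SMP dichotomy at $\lambda_*$ is componentwise: for each $i$ separately, either $w_i^{\lambda_*}\equiv0$ or $w_i^{\lambda_*}>0$, so the correct case distinction is ``all $\equiv0$'' versus ``$w_i^{\lambda_*}>0$ for \emph{some} $i$'' (which is what the paper uses and is all that is needed). In case~(b), the linearization must also be symmetrized as in Lemma~\ref{uso serrin} to guarantee $a_{1k}=0$ on $T_{\lambda_*}$ for $k>1$; this structural property follows from $(H_0)$ together with the $C^1$-in-$X$ hypothesis and is the real reason Serrin's corner lemma applies, a point you gloss over when you simply write $F_i(p,X)-F_i(p,Y)=\mathrm{tr}(A(x)(X-Y))$. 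Finally, the paper organizes the conclusion slightly differently --- it first shows $\Omega$ is a ball via the directionwise symmetry and then cites Theorem~\ref{Tbola} for the radiality of the $u_i$ --- but your more direct derivation is equally valid, as is your observation that all the symmetry hyperplanes must pass through one point since otherwise the group they generate contains a nontrivial translation, contradicting the boundedness of $\Omega$.
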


As already observed in \cite{BSoverdet}, the $C^1$ hypothesis on $F_i$ is necessary in order to apply Serrin's lemma \cite[Lemmas 1 and 2]{Serrin}, which is a version of Hopf lemma in domains with corners.

A model case for Theorem \ref{Tbola} consists of Pucci's operators, or more generally $\mathcal{F}_i(x,\textbf{u},p,X)=\mathcal{M}_i^\pm (X)\pm \gamma_i (|x|)|p|\pm \mu_i(|x|)|p|^2+f_i(|x|,\textbf{u})$ for some bounded functions $\gamma_i, \mu_i \geq 0$, and we emphasize that our results are new even for systems involving the Laplacian operator. 
For Theorem~\ref{Toverd}, a simple example is  $\mathcal{F}_i(\textbf{u},p,X)= \mathrm{tr} (A^i X)+b_i|p| +\mu_i|p|^2+f_i(\textbf{u})$, where $A^i$ is a positive matrix, and $b_i,\mu_i$ are nonnegative constants. 

Regarding viscosity solutions, it is possible to obtain a restatement of Theorem \ref{Tbola} for the ones which are continuously differentiable up to the boundary. In this case we need to impose continuity of the operators $F_i$ in the variable $x$ as in \cite[(H1)]{BdaLio}, namely
\smallskip
\begin{itemize}
\item[($H_6$)]
$| F_i\, (x,p,X) - F_i\, (y,q,Y) |\leq \gamma\{ |p-q|+\|X-Y\| +|x-y|(\|X\|+\|Y\| ) \}  +\omega (|x-y|(1+|p|+|q|) )$
\end{itemize} 
\smallskip
for all $X,Y\in \mathbb{S}^N(\real)$, $p,q \in \rN$, and $x,y\in\Omega$, for some continuous function $\omega$ with $\omega (0)=0$. 

\begin{teo}\label{Tvisc}
Let $\Omega$ be a ball of radius $R$ centered at the origin.  Assume \eqref{H}, ($H_6$), and moreover $f_i(\cdot,0,\ldots , 0 )\geq 0$ for all $1\leq i\leq n$.
Let $(u_1, \ldots , u_n)$  be a viscosity solution of \eqref{P}, where $u_i$'s are  $C^1(\overline{\Omega})$ functions. 
Then $u_i$ is radially symmetric and $\partial_r u_i <0$, for all $r=|x|\in (0,R)$, $1\leq i\leq n$. 
\end{teo}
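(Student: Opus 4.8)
The plan is to reduce Theorem~\ref{Tvisc} to essentially the same moving planes scheme used for Theorem~\ref{Tbola}, the only genuine differences being that solutions are now merely $C^1(\overline{\Omega})$ viscosity solutions, so all pointwise comparisons must be read in the $C$-viscosity sense and the additional continuity hypothesis $(H_6)$ is what makes the viscosity comparison principle available. Concretely, for a direction $e\in\mathbb{S}^{N-1}$ and a level $\lambda$ write $\Sigma_\lambda=\{x\in\Omega:\, x\cdot e>\lambda\}$, let $x^\lambda$ be the reflection of $x$ across the hyperplane $T_\lambda=\{x\cdot e=\lambda\}$, and set $u_i^\lambda(x)=u_i(x^\lambda)$ and $w_i^\lambda=u_i^\lambda-u_i$ on $\Sigma_\lambda$. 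Using $(H_0)$ together with $(H_3)$ (rotational invariance and monotonicity in $r=|x|$) one checks, exactly as in the proof of Theorem~\ref{Tbola}, that each $u_i^\lambda$ is a viscosity subsolution of the equation solved by $u_i$ on $\Sigma_\lambda$ (here one uses $|x^\lambda|\le|x|$ for $x\in\Sigma_\lambda$ when $\lambda\ge 0$); subtracting and linearizing $f_i$ in the $u_i$ variable via the splitting $(H_4)$, and in the gradient variable via $(H_2)$ applied on the ball $\overline{B}_\sigma$ with $\sigma=\max_i(\|u_i\|_\infty+\|Du_i\|_\infty)$, one obtains that $W=(w_1^\lambda,\dots,w_n^\lambda)$ satisfies, in the viscosity sense, a cooperative system of differential inequalities
\begin{align}\label{linsys}
\mathcal{M}_i^-(D^2 w_i^\lambda)-\gamma|Dw_i^\lambda|-\mu_\sigma|Dw_i^\lambda|-c_i(x)\,w_i^\lambda-\sum_{j\neq i} d_{ij}(x)\,w_j^\lambda\le 0 \quad\text{in }\Sigma_\lambda,
\end{align}
with $d_{ij}\ge 0$ for $i\ne j$ (cooperativity from $(H_5)$) and $c_i$ bounded above (Lipschitz part of $(H_4)$; the nondecreasing part $f_{i,2}$ contributes a favourable sign, which is precisely the computation flagged at \eqref{conta f}), together with $w_i^\lambda\ge 0$ on $\partial\Sigma_\lambda$.

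Next I would run the standard two-part argument. For the \emph{start}: pick $\lambda$ close to $R$, so that $\Sigma_\lambda$ is a narrow cap of small measure, and invoke the maximum principle in small domains for cooperative fully nonlinear systems under structure $(H_1)$ (the analogue of the Berestycki--Nirenberg small-domain principle, which should be among the preliminary results of the paper or a direct consequence of the Pucci-operator version plus summing over $i$); this forces $W\ge 0$ on $\Sigma_\lambda$. For the \emph{continuation}: let $\lambda^*$ be the infimum of the set of $\lambda>0$ for which $W\ge 0$ on $\Sigma_\mu$ for all $\mu\in[\lambda,R)$; by continuity (here $C^1(\overline{\Omega})$ of the $u_i$ is enough) $W\ge 0$ on $\Sigma_{\lambda^*}$. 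If $\lambda^*>0$, the strong maximum principle and Hopf lemma for the cooperative system \eqref{linsys} — valid because each scalar operator in \eqref{linsys} is proper and dominates $\mathcal{M}_i^-$ minus first-order terms, and the coupling is cooperative — give that either some $w_i^{\lambda^*}\equiv 0$ in the connected component, or $w_i^{\lambda^*}>0$ in $\Sigma_{\lambda^*}$ with $\partial_\nu w_i^{\lambda^*}>0$ on $T_{\lambda^*}\cap\Omega$. The identically-zero alternative is excluded exactly as in Gidas--Ni--Nirenberg using the strict monotonicity in $r$ in $(H_3)$: on the part of $\partial\Sigma_{\lambda^*}$ lying on $\partial\Omega$ one has $u_i^{\lambda^*}>0=u_i$ unless the reflected cap reaches the origin, and the sign condition $f_i(\cdot,0,\dots,0)\ge 0$ is what rules out the degenerate boundary configuration (this is the role that hypothesis plays, paralleling its use for viscosity solutions in \cite{BdaLio}). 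One then decreases $\lambda$ slightly past $\lambda^*$, splitting $\Sigma_\lambda$ into a piece where $W$ is already strictly positive and is kept nonnegative by continuity, and a thin piece near $\partial\Omega$ handled again by the small-domain principle — contradicting minimality of $\lambda^*$. Hence $\lambda^*=0$, giving $W\ge 0$ on the half $\{x\cdot e>0\}$; applying the same argument to $-e$ yields the reverse inequality, so $u_i$ is symmetric about every hyperplane through the origin, i.e.\ radial, and the Hopf inequality at $\lambda=0$ together with the strong maximum principle upgrades $\partial_r u_i<0$ for $0<r<R$.

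The main obstacle I expect is carrying the classical pointwise comparison steps through in the \emph{viscosity} framework under only $C^1$ regularity: one must verify that $u_i^\lambda$ is indeed a $C$-viscosity subsolution of the reflected equation (this is where $(H_0)$, $(H_3)$ and $(H_6)$ are all needed — $(H_6)$ controls the $x$-dependence when comparing $F_i$ at $x$ and at $x^\lambda$), that the difference $W$ solves a cooperative viscosity system of the form \eqref{linsys} with the coefficients having the right signs (the delicate point being the treatment of the non-Lipschitz, only monotone part $f_{i,2}$ — here the key trick, as in the proof of Theorem~\ref{Tbola}, is to \emph{not} linearize $f_{i,2}$ but to keep it as a monotone term and absorb it using its sign, which is exactly the content of the computation at \eqref{conta f}), and that the small-domain maximum principle, the strong maximum principle and Hopf's lemma are all available for such cooperative fully nonlinear systems in the viscosity setting. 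Once those viscosity-theoretic facts are in place — most of which should be quotable from the preliminary section or adapted verbatim from the proof of Theorem~\ref{Tbola} — the geometric moving-planes bookkeeping is identical to the $C^2$ case, with the sign condition $f_i(\cdot,0,\dots,0)\ge0$ supplying the one extra ingredient needed to close the boundary analysis for merely continuous data.
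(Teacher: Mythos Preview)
Your proposal has a genuine gap: the Berestycki--Nirenberg small-domain maximum principle, which you invoke both to start the planes and in the continuation step, is precisely what the paper is designed to \emph{avoid}, because it requires the zero-order term in the linearized equation to have a bounded coefficient. Under $(H_4)$ this fails. The computation at \eqref{conta f} that you cite only discards the monotone piece $f_{i,2}(x,u_i^\lambda,\ldots)-f_{i,2}(x,u_i,\ldots)$ after one already knows $U_i^\lambda\ge 0$ (it is stated under that assumption in Lemma~\ref{SMP e Hopf sigma lambda}). In the small-domain argument you are trying to \emph{prove} $W\ge 0$; on the set $\{w_i^\lambda<0\}$ one has $u_i^\lambda<u_i$, so $f_{i,2}(u_i^\lambda)-f_{i,2}(u_i)\le 0$, which lands on the \emph{wrong} side of the inequality and cannot be absorbed into a bounded $c_i(x)w_i^\lambda$. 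The ABP estimate then carries an uncontrolled right-hand side and does not force $W^-=0$ however small $|\Sigma_\lambda|$ is. The paper states this obstruction explicitly in Section~\ref{Preliminaries}.

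What the paper does instead is follow the original Gidas--Ni--Nirenberg route of Proposition~\ref{th elipse}. For the \emph{start}, the extra hypothesis $f_i(\cdot,0,\ldots,0)\ge 0$ is not used to ``rule out a degenerate boundary configuration'' as you suggest; it places us in Case~1 of Step~1, where Hopf is applied directly to $u_i$ (not to $w_i^\lambda$): since $u_i>0$ in $\Omega$, the monotone part satisfies $f_{i,2}(x,u_i,\ldots)-f_{i,2}(x,0,\ldots)\ge 0$ with the \emph{correct} sign, yielding $\widetilde{\mathcal{L}}^-[u_i]\le d_f u_i$ and hence $\partial_\nu u_i>0$ on $\partial\Omega$, so $\partial_{x_1}u_i<0$ near $\partial\Omega$ and $U_i^\lambda>0$ in $\Sigma_\lambda$ for $\lambda$ close to $R$. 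This sign assumption also bypasses Case~2 of Step~1, which relies on second derivatives at boundary points and is unavailable for merely $C^1$ solutions. For the \emph{continuation}, no small-domain principle is used either; one runs the compactness/sequence argument of Step~2 (mean value theorem on line segments, which needs only $C^1$). The specifically viscosity-theoretic input is that, because $u_i\in C^1(\overline{\Omega})$, one can freeze $Du_i(x)$ as a continuous coefficient, so $u_i$ is a $C$-viscosity solution of $F_i(x,Du_i,D^2u_i)+\bar f_i(x)=0$ with $\bar f_i\in C(\overline\Omega)$; then \cite[Proposition~2.1]{BdaLio} (this is where $(H_6)$ enters) produces the Pucci inequality for $U_i^\lambda$ in the viscosity sense, and SMP/Hopf for viscosity solutions close the argument.
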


We stress that viscosity solutions of fully nonlinear equations usually have $C^{1,\alpha}$ regularity if the operator is continuous with respect to $x$ and has structure as in $(H_1)$.  In the presence of a superlinear and at most quadratic gradient growth this follows from \cite{regularidade}.  We prefer to keep hypothesis $(H_2)$, which is more general, once symmetry in itself does not require some particular growth. On the other hand, the hypothesis on the sign of $f_i$ is quite natural when considering moving planes throught fully nonlinear structures, see \cite{BQnonproper, BSoverdet}.

\smallskip

As an application, we obtain uniqueness results for the following Lane-Emden system posed in a ball,
\begin{align} \label{LE} \tag{LE}
\left\{
\begin{array}{rclcc}
\Delta u +v^q &=& 0 &\mbox{in} & \;B \\
\Delta v +u^p &=& 0 &\mbox{in} & \;B \\
u,\, v &>& 0 &\mbox{in} & \;B \\
u,\, v &=& 0 &\mbox{on} & \partial B
\end{array}
\right.
\end{align}
where $p,q>0$, $pq\neq 1$. Namely, $pq=1$ is a separate case related to eigenvalue problems, in which the scaling $(t^q u,tv)$, $t \in (0, \infty)$, produces multiple eigenfunctions; see \cite{Dalmasso, Montenegro} for instance.

It is well known that criticality plays a meaningful role in existence and nonexistence results. As far as problem \eqref{LE} is concerned, we will be interested in the subcritical case
\begin{align}\label{subcritico}
\frac{N}{p+1}+\frac{N}{q+1}>N-2,  \ \ p,q>0, \ \ N\geq 1,
\end{align}
since no solution exists in the complementary setting; see for example \cite[Proposition 3.1]{Mitidieri93}. Several papers prove existence of  positive solution in the subcritical case for a general bounded regular domain; see for instance \cite[Theorems 1.3 and 1.4]{EdersonTAMS2012} and \cite{EdersonPortugaliae} for a rather complete overview on the subject. Uniqueness is known when $pq<1$ for general bounded smooth domains; see \cite[Theorem 4.1]{Montenegro} or \cite[Theorem 7.1]{EdTran2018}. 
However, we cannot expect a uniqueness result holding true for general domains if $pq>1$, since multiplicity has already been proved for an annulus in \cite[Theorem 1.1]{AnelNonuniq}.
In the case of a ball, uniqueness of positive radial solution follows from \cite[Theorem~1.1 (i)]{Dalmasso}. Here we complement these results.

\begin{corol}\label{uniqLE}
Let $B$ be a ball and $p,q>0$. Then,
\begin{enumerate}[(i)]
\item Every pair of solutions $u,v \in C^2(B)\cap C(\overline{B})$ of \eqref{LE} is radially symmetric and strictly decreasing;

\item Under \eqref{subcritico} with $pq\neq1$, the problem \eqref{LE} has a unique solution $u,v \in C^2(B)\cap C(\overline{B})$. 
\end{enumerate}
\end{corol}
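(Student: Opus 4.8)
The plan is to derive part (i) directly from Theorem \ref{Tbola} and then use the radial symmetry, together with the known radial uniqueness result, to conclude part (ii). For part (i), the task is to check that the Lane--Emden system \eqref{LE} fits into the framework \eqref{H} with $n=2$, $F_1=F_2=\Delta$, $f_1(x,\mathbf{u},p)=v^q$ and $f_2(x,\mathbf{u},p)=u^p$. The rotational invariance $(H_0)$ is immediate for the Laplacian together with $x$-independent zero-order terms; $(H_1)$ holds with $\gamma=0$ since $\mathcal{M}^-_i(X-Y)\le \mathrm{tr}(X-Y)\le \mathcal{M}^+_i(X-Y)$ and $\Delta$ has no gradient dependence; $(H_2)$ is trivial since $f_i$ does not depend on $p$; $(H_3)$ holds because the coefficients are constant in $x$, hence trivially nonincreasing in $r=|x|$. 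For $(H_4)$, note that $f_1$ does not depend on $u_1$ at all (so take $f_{1,1}\equiv f_1$, which is Lipschitz in $u_1$, and $f_{1,2}\equiv0$), and symmetrically for $f_2$. Finally $(H_5)$, the cooperativity condition, holds precisely because $p,q>0$: $u_1\mapsto u_1^p$ and $u_2\mapsto u_2^q$ are nondecreasing on $(0,\infty)$. One subtlety: Theorem \ref{Tbola} is stated for $C^2(\overline{\Omega})$ solutions, whereas the corollary only assumes $u,v\in C^2(B)\cap C(\overline B)$. This gap is closed by elliptic regularity: since $v\in C(\overline B)$ one has $v^q\in C^{\alpha}(\overline B)$ (using $q>0$, after noting $v>0$ is bounded, or splitting into $q\ge1$ and $0<q<1$ where Hölder continuity of $t\mapsto t^q$ is used), so $\Delta u=-v^q\in C^{\alpha}(\overline B)$ and Schauder estimates up to the boundary (the ball being smooth) give $u\in C^{2,\alpha}(\overline B)$; symmetrically $v\in C^{2,\alpha}(\overline B)$. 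Hence Theorem \ref{Tbola} applies and yields radial symmetry of both $u$ and $v$ together with $\partial_r u<0$, $\partial_r v<0$ on $(0,R)$.

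For part (ii), existence of a positive solution $u,v\in C^2(B)\cap C(\overline B)$ in the subcritical range \eqref{subcritico} with $pq\ne1$ is classical; I would cite \cite[Theorems 1.3 and 1.4]{EdersonTAMS2012} (valid for a general smooth bounded domain, in particular the ball). By part (i) every such solution is radially symmetric and strictly decreasing, hence is a positive radial solution of \eqref{LE}. Now invoke \cite[Theorem~1.1 (i)]{Dalmasso}, which asserts uniqueness of the positive radial solution of \eqref{LE} in a ball in the subcritical case with $pq\ne1$. Combining the two, any two solutions in $C^2(B)\cap C(\overline B)$ are both radial and must coincide with the unique radial one, giving uniqueness.

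The main obstacle is not conceptual but lies in the regularity bookkeeping in part (i): one must verify that a solution merely assumed continuous up to the boundary is in fact $C^2$ (indeed $C^{2,\alpha}$) up to $\partial B$, so that the hypotheses of Theorem \ref{Tbola} are genuinely met. The delicate point is the Hölder continuity of $t\mapsto t^q$ across the boundary where $u,v$ vanish: for $0<q<1$ the map is $q$-Hölder (hence $\min\{q,\alpha\}$-Hölder after composing with the Hölder-continuous boundary trace), while for $q\ge 1$ it is locally Lipschitz on the bounded range of $v$; either way $v^q\in C^{\alpha}(\overline B)$ for some $\alpha\in(0,1)$, and a bootstrap with interior and boundary Schauder estimates upgrades both components to $C^{2,\alpha}(\overline B)$. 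Once this is in place, parts (i) and (ii) follow immediately from the cited theorems, so no further estimates are needed.
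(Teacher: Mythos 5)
Your overall strategy is exactly the one the paper intends: verify that \eqref{LE} fits the framework \eqref{H} with $F_1=F_2$ the Laplacian, $f_1=v^q$, $f_2=u^p$ (cooperativity $(H_5)$ holding because $p,q>0$, and $(H_4)$ trivially since $f_i$ does not depend on $u_i$), apply Theorem~\ref{Tbola} for part~(i), then combine existence \cite[Theorems 1.3, 1.4]{EdersonTAMS2012} with Dalmasso's radial uniqueness \cite[Theorem 1.1(i)]{Dalmasso} for part~(ii). You also correctly noticed the mismatch in regularity classes: the corollary assumes only $u,v\in C^2(B)\cap C(\overline B)$, while Theorem~\ref{Tbola} requires $C^2(\overline B)$, so an upgrade is needed before the theorem can be invoked.

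Your bootstrap, however, has a genuine gap in its first step. You assert that ``since $v\in C(\overline B)$ one has $v^q\in C^{\alpha}(\overline B)$,'' but mere continuity of $v$ does not give any Hölder modulus for $v^q$: composing with $t\mapsto t^q$ preserves or produces a Hölder exponent only when the inner function is itself Hölder, and $v\in C(\overline B)$ gives no such exponent a priori. The correct chain is: $v\in C(\overline B)$ is bounded, so $\Delta u=-v^q\in L^\infty(B)$; together with $u=0$ on $\partial B$ and the smoothness of the ball, Calder\'on--Zygmund estimates up to the boundary give $u\in W^{2,p}(B)$ for every $p<\infty$, hence $u\in C^{1,\alpha}(\overline B)$ by Sobolev embedding, and the same for $v$. \emph{Only at that point} does the dichotomy you describe ($q\ge 1$: locally Lipschitz on the bounded range of $v$; $0<q<1$: $q$-Hölder) yield $v^q\in C^{\beta}(\overline B)$ for some $\beta\in(0,1)$, whence Schauder gives $u\in C^{2,\beta}(\overline B)$, and symmetrically for $v$. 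With this missing $L^p$ step inserted, the rest of your argument is correct.
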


We stress that uniqueness for positive solutions in the superlinear case $pq>1$, under the extra condition $p,q\geq 1$, follows from \cite[Theorem 1]{Troy} combined with \cite[Theorem 1.1 (i)]{Dalmasso}.  The novelty here is the extension for $p,q>0$ in which one of them is strictly less than one, situation where differentiability or Lipschitz condition is no longer true.

\smallskip

The rest of the paper is organized as follows. In the preliminary Section \ref{Preliminaries} we introduce some notations and briefly recall the moving planes method. In Section  \ref{secao proofs} we develop the proofs of the main theorems. 
Section~\ref{secao aplicacoes} is devoted to further discussions and applications. It includes results for equations involving Pucci's extremal operators and a description for a continuum of solutions treated in \cite{multiplicidade}, which comes from a class of problems  with natural growth in the gradient.

\section{Preliminaries and notations}\label{Preliminaries}

Assume that $F_i:\overline{\Omega}\times\rN\times\mathbb{S}^N(\mathbb{R})\rightarrow\real$ satisfies $(H_1)$, for all $i\in \{1,\ldots, n\}$. 
In $(H_1)$,
$$
\mathcal{M}^+_i(X):=\sup_{\alpha_i I\leq A\leq \beta_i I} \mathrm{tr} (AX)\,,\quad \mathcal{M}_i^-(X):=\inf_{\alpha_i I\leq A\leq \beta_i I} \mathrm{tr} (AX)
$$
are the Pucci's extremal operators with ellipticity constants $0<\alpha_i\leq \beta_i$, $i\in \{1,\ldots,n\}$.
See, for example, \cite{CafCab} for their properties. In particular, the condition $(H_1)$ on the $X$ entry means that $F_i$ is a uniformly elliptic operator, uniformly continuous in $(p,X)$. 

Throughout the text we denote 
\begin{equation}\label{defLcomPucci}
\mathcal{L}^\pm [w]:=\mathcal{M}_1^\pm (D^2 w)\pm \gamma|Dw|, \quad \mathcal{\widetilde{L}}^\pm [w]:=\mathcal{M}_1^\pm (D^2 w)\pm (\gamma +\mu )|Dw|, 
\end{equation} 
where $\mu=\mu_{\sigma}$, with $\sigma = 2\|w\|_{C^1(\overline{\Omega})}$, comes from hypothesis $(H_2)$.
Moreover, $\partial_\nu$ is the derivative in the direction of the interior unit normal vector. 
Along the text we will use the strong maximum principle and Hopf lemma for single equations involving Pucci's operators in the form \eqref{defLcomPucci}, which we write SMP and Hopf for short; for instance see \cite{BardiF}. 

Now we recall the definition of viscosity solution, understood in the $C$-viscosity sense; see also \cite{CafCab}.
\begin{defin}\label{def Lp-viscosity sol}
We say that $\textbf{u}=(u_1,\ldots , u_n)\in (C(\Omega))^n$ is a viscosity subsolution $($supersolution$)$ of  
\begin{center}
$\mathcal{F}_i(x,\textbf{u},Du_i,D^2 u_i)=0$\, in\, $\Omega$, \quad $i=1, \ldots ,n$,
\end{center}
if, for each $i\in \{1,\cdots , n\}$, whenever $\phi\in  C^{2}(B_s(x_0))$ is such that $u_i-\phi$ have a local maximum $($minimum$)$ at $x_0$, it follows
$\mathcal{F}_i(x_0,\textbf{u}(x_0),D\phi(x_0),D^2\phi (x_0))  \geq 0\;\,
( \mathcal{F}_i(x_0,\textbf{u}(x_0),D\phi(x_0),D^2\phi (x_0))  \leq 0)$. 
\end{defin}

Next we recollect the main ingredients and precise assumptions for the moving planes method in the $x_1$ axis direction. For the time being, consider $\Omega$ as a bounded $C^{2}$ domain.
Assume that $\Omega$ is convex in the $x_1$ direction and symmetric with respect to the plane $x_1=0$, that is,
\begin{align}\label{Omega x1}
\textrm{if \,} (x_1\ \ldots , x_N) \in \Omega, \textrm{\, then\, } (t,x_2,\ldots,x_N)\in \Omega , \textrm{ for all\, } t\in [-x_1,x_1].
\end{align}

Our tools are the parallel hyperplanes $$T_\lambda=\Omega\cap \{x_1=\lambda\}.$$
Let $R=\sup_{x\in \Omega}\, x_1 $, where $x = (x_1, \ldots, x_N)$. 
For $\lambda <R$ we  consider the right cap $$\Sigma_\lambda =\Omega\cap \{x_1 >\lambda\},$$ i.e.\ the part of the semiplane located on the right hand side of $T_\lambda$ which is in $\Omega$.
Moreover, we denote by $A^\lambda$ the reflection of a set $A$ with respect to the plane $\{x_1=\lambda\}$.

We start decreasing $\lambda$ from $R$, by moving the plane $\{x_1=\lambda\}$ from right to left as far as $\{x_1=\lambda\}$ intersects $\overline{\Omega}$ with $\Sigma_\lambda^\lambda \subset \Omega$. From the usual index notation \cite{GNiN, Troy}, we denote
\begin{align*}
\Lambda_2 = \inf \{ \lambda < R\,; \; \Sigma_\mu^\mu \subset \Omega, \textrm{ for all } \mu \in (\lambda, R) \, \}.
\end{align*}

The major challenge in the moving planes technique that goes back to Gidas, Ni and Nirenberg is to deal with the reflection of the \textit{boundary} of $\Sigma_\lambda$, specifically the part of the boundary that is on $\partial\Omega$. 
Namely, $(\partial\Sigma_\lambda \cap  \{x_1>\lambda\} )^\lambda$ is contained within the domain $\Omega$ for $\lambda$ sufficiently close to $R$, due to the regularity of the domain. 

As in \cite{GNiN}, when we decrease the values of $\lambda$, it happens that $\Sigma_\lambda$ reaches a position in which at least one of the following situations occurs for the first time:
\smallskip

(I) $ \Sigma_\lambda^\lambda$ becomes internally tangent to $\partial\Omega$ at some point which is not on $\{x_1=\lambda\}$;
\smallskip

(II) $\{x_1=\lambda\}$ reaches a position where it is orthogonal to $\partial\Omega$ at some point. 

\medskip

Such a value of $\lambda$ is denoted by $\Lambda_1$ (of course $\Lambda_1 =0$ if $\Omega$ is a ball centered at $0$), that is, 
\begin{align*}
\Lambda_1 = \inf \{ \lambda < R\,; \; \Sigma_\mu  \textrm{ does not reach positions (I) and (II), for all } \mu \in (\lambda, R) \, \}.
\end{align*}
Note that $\Sigma_{\Lambda_1}^{\Lambda_1}\subset \Omega$, and the limiting position $\Lambda_2$ can be less than $\Lambda_1$. In general, $\Lambda_2\leq \Lambda_1$. Observe that, if $\Omega$ satisfies \eqref{Omega x1}, then $R>0$, $\Lambda_2=0$; and if $\Lambda_1$ was positive, then it would happen at a point in which the plane $\{x_1=\Lambda_1\}$ is orthogonal to $\partial\Omega$. 
 
The preceding difficulty never appears in the approach of Berestycki-Nirenberg for the moving planes method, since they do not need to take into account the behavior of the reflection of $\partial\Sigma_\lambda$, but only of $\Sigma_\lambda$ itself. This is possible through the maximum principle for small domains, which is applied in a neighborhood of $\partial\Omega$. Nevertheless, such an approach requires a Lipschitz condition on $f_i$ which we are not assuming.

In any case we define
$$
U_i^\lambda =u_i^\lambda-u_i \, , \;\;\textrm{ for }\;\, u_i^\lambda (x)=u_i(x^\lambda),
$$
where $x^\lambda := (2\lambda -x_1, x')$ is the reflection of the point $x$ with respect to the plane $\{x_1=\lambda\}$, for each $x=(x_1, x')\in \Omega\,$; here $x^\prime =(x_2,\ldots,x_N)\in \mathbb{R}^{N-1}$.
\smallskip

Regarding the $x_1$ direction, as in \cite{BdaLio} we denote 
$\bar{p}=(-p_1,p_2,\ldots , p_N)$, and $\bar{X}$ as the matrix with entries $\epsilon_{\iota j}X_{\iota j}$,
where $\epsilon_{11}=1$, $\epsilon_{\iota j}=1$ if $\iota,j\geq 2$, 
and $\epsilon_{1j}=\epsilon_{j1}=-1$ if $j\neq1$, for any $p=(p_1,\ldots,p_N)\in \rN$ and for any symmetric matrix $X=(X_{\iota j})_{\iota,j}\in \mathbb{S}^N(\mathbb{R})$. 
Observe that $X$ and $\bar{X}$ have the same eigenvalues. We then assume as in \cite{BN},
instead of $(H_0)$, 
\smallskip
\begin{itemize}
\item[$(\widetilde{H}_0)$] $F_i\,(y_1,x^\prime,\bar{p},\bar{X}) + f_i\,(y_1,x^\prime,\textbf{u}, \bar{p})\geq F_i \,(x,p,X)+ f_i \,(x,\textbf{u},p)$, for all $p,X$, $\textbf{u}=(u_1,\ldots, u_n)$, and for $x=(x_1,x^\prime)\in \Omega$ such that $y_1< x_1$ with $y_1+x_1>0$.
\end{itemize}
Notice that $(\widetilde{H}_0)$ itself comprises monotonicity in the $x$ entry with respect to the $x_1$ direction; so we do not need to assume a version of $(H_3)$ in the $x_1$ direction.

Notice that $x_1> x_{1}^\lambda> -x_1$ for all $x\in \Sigma_\lambda$ when $\lambda >\Lambda_2$. Thus, it follows from $(\widetilde{H}_0)$ that, for any solution $(u_1,\ldots , u_n)$ of \eqref{Plambda}, $u_i^\lambda$ satisfies the following inequality
\begin{align}\label{eq ulambda}
-F_i\,(x, Du_i^\lambda(x),D^2u_i^\lambda(x)\,) \geq f_i\,(x, u_1^\lambda(x),\ldots, u_n^\lambda(x), Du_i^\lambda(x)\,) \; \textrm{ in } \Sigma_\lambda .
\end{align}

\medskip

Further, as pointed out in \cite{GNiN}, we need the following hypothesis on $f_i(\cdot,0,\ldots,0)$ in the nonradial case. 

\smallskip
\begin{itemize}
\item[$(H_7)$] For each $i\in \{1,\ldots,n\}$, on $\partial\Omega\cap \{x_1>0\}$ we have either $f_i \,(\cdot,0,\ldots,0)\geq 0$ or $f_i\, (\cdot,0,\ldots,0)< 0$.
\end{itemize}
\smallskip
Note that such condition is trivially satisfied in the radial scenario since $f(x,0, \ldots,0)$ is constant on $\partial \Omega$. Moreover, set \eqref{Hx1} as
\begin{align}\label{Hx1}\tag{$\widetilde{H}$}
(\widetilde{H}_0),\; (H_1),\;(H_2),\;  (H_4), \;(H_5), \textrm{ and } (H_7).
\end{align}

To finish the section, we observe that assumption $(H_5)$ on $f_i$ is equivalent to
$c_{ij} \geq 0$ for all $i\neq j$, in which the function $c_{ij}$ is defined as 
\begin{equation}\label{cross}
c_{ij}(x,h,p)=\frac{1}{h} \,\{ f_i (x,\textbf{w}^j,p)-f_i (x,\textbf{u},p)\} \textrm{\, if\, } h\neq 0, \textrm{ \,and\, } c_{ij}(x,0,p)=0,
\end{equation}
where $\textbf{w}^j=\textbf{w}^j(\textbf{u},h)=(w_1,\ldots , w_n)$ with $w_j=u_j+h$, $w_l=u_l$ if $l\neq j$ and $\textbf{u} = (u_1, \ldots, u_n)$.  
We also denote the standard orthonormal basis in $\rN$ as $\{e_l\}_{ 1\leq l \leq  N }$. 
\smallskip

\begin{rmk}\label{Remark lambda>0}
Notice that, for $\lambda >\Lambda_2$, the regularity of the domain implies the existence of some points in the reflection of the boundary portion $ \partial \Sigma_\lambda \cap \partial \Omega$ which lie in $\Omega$. So, at such points we have $U_i^\lambda>0$ for all $i\in\{1,\ldots,n\}$. 
In particular, we can never have $U_i^\lambda \equiv 0$ in $\Sigma_\lambda$ for some $i$ in this case.
If, moreover, $\lambda >\Lambda_1$, then $ (\partial \Sigma_\lambda \cap \{x_1>\lambda\})^\lambda \subset \Omega$.
\end{rmk}

\section{Proofs of the main theorems}\label{secao proofs}

We start with a symmetry result that comprises a class of domains for which symmetry can be obtained in one direction without additional hypotheses on the operators $F_i$, for example if $\Omega$ is an ellipse.

\begin{prop}\label{th elipse}
Let $\Omega$ be a $C^{2}$ domain satisfying \eqref{Omega x1}, where situation $\mathrm{(II)}$ in the preceding section does not occur for any $\lambda >0$.
Assume \eqref{Hx1}, for all $1\leq i\leq n$.
Let $(u_1, \ldots , u_n)$  be a solution of \eqref{P}, where $u_i$'s are $C^2(\overline{\Omega})$ functions. 
Then $u_i$ is symmetric with respect to the plane $x_1 =0$, and $\partial_{x_1} u<0$ for any $x_1>0$, for all $1\leq i\leq n$.
\end{prop}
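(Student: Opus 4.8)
The plan is to run the method of moving planes in the $x_1$-direction, in the classical Gidas--Ni--Nirenberg form adapted to the cooperative fully nonlinear system.

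\textbf{Step 1: the differential inequality for $U_i^\lambda$.} Fix $i$ and $\lambda\in(\Lambda_2,R)=(0,R)$. Combining \eqref{eq ulambda} for $u_i^\lambda$ (which comes from $(\widetilde H_0)$) with the equation for $u_i$, and estimating the differences by $(H_1)$ (the $F_i$ part, yielding $\mathcal M_i^-(D^2U_i^\lambda)$ and $\gamma|DU_i^\lambda|$), by $(H_2)$ (the gradient dependence of $f_i$, yielding $\mu_\sigma|DU_i^\lambda|$ with $\sigma=2\|u_i\|_{C^1(\overline{\Omega})}$), by the splitting $(H_4)$ ($f_{i,1}$ produces a bounded coefficient $\beta_i$, while $f_{i,2}$ produces a term $g_i$ with $\mathrm{sgn}\,g_i=\mathrm{sgn}\,U_i^\lambda$), and by the off-diagonal structure $(H_5)$ via \eqref{cross} (coefficients $c_{ij}\ge0$), one arrives at
\[
\widetilde{\mathcal{L}}^-[U_i^\lambda]+\beta_i(x)U_i^\lambda+g_i(x)+\sum_{j\neq i}c_{ij}(x)U_j^\lambda\le0\quad\text{in }\Sigma_\lambda,
\]
with $\beta_i,c_{ij}\in L^\infty(\Sigma_\lambda)$, $c_{ij}\ge0$; moreover $U_i^\lambda=0$ on $T_\lambda$ and $U_i^\lambda\ge0$ on $\partial\Omega\cap\partial\Sigma_\lambda$ (since $u_i=0$ there and $u_i\ge0$), while $U_i^\lambda\not\equiv0$ by Remark~\ref{Remark lambda>0}.

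\textbf{Step 2: sliding.} For $\lambda$ near $R$ the cap $\Sigma_\lambda$ has small measure, and a maximum principle in small domains for cooperative systems of Pucci type gives $U_i^\lambda\ge0$ in $\Sigma_\lambda$ for all $i$. Set $\Lambda:=\inf\{\lambda\in(0,R):U_i^\mu\ge0\text{ in }\Sigma_\mu\ \forall\mu\in(\lambda,R),\ \forall i\}<R$ and aim at $\Lambda=0$. If $\Lambda>0$, continuity gives $U_i^\Lambda\ge0$; since $g_i\ge0$ wherever $U_i^\lambda\ge0$, each $U_i^\Lambda$ is a nonnegative supersolution of $\widetilde{\mathcal{L}}^-[\cdot]+\beta_i(x)\cdot$, whence SMP forces $U_i^\Lambda>0$ in $\mathrm{int}\,\Sigma_\Lambda$ ($U_i^\Lambda\equiv0$ being excluded by Remark~\ref{Remark lambda>0}). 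As situation (II) does not occur for $\lambda>0$, for $\lambda<\Lambda$ close to $\Lambda$ the reflected caps remain inside $\Omega$, so splitting $\Sigma_\lambda$ into a compact core (where positivity of $U_i^\Lambda$ plus continuity keep $U_i^\lambda\ge0$) and a thin collar near $\partial\Sigma_\lambda$ (where the small-domain principle applies) yields $U_i^\lambda\ge0$, contradicting minimality. Hence $\Lambda=0$, i.e. $U_i^0\ge0$ in $\Omega\cap\{x_1>0\}$.

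\textbf{Step 3: conclusion.} Running the same argument in the $-x_1$ direction gives the reverse inequality, so each $u_i$ is symmetric about $\{x_1=0\}$. For each $\lambda\in(0,R)$, SMP gives $U_i^\lambda>0$ in $\mathrm{int}\,\Sigma_\lambda$, and Hopf's lemma on $T_\lambda$ (where $U_i^\lambda=0$) gives $\partial_{x_1}U_i^\lambda>0$ there; since $\partial_{x_1}U_i^\lambda=-2\,\partial_{x_1}u_i$ on $T_\lambda$, this is $\partial_{x_1}u_i<0$, and varying $\lambda$ over $(0,R)$ covers $\Omega\cap\{x_1>0\}$.

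\textbf{Main obstacle.} Because $f_{i,2}$ is only nondecreasing, $g_i$ is not of the form $(\text{bounded coefficient})\times U_i^\lambda$, so the Berestycki--Nirenberg linearized small-domain maximum principle does not apply verbatim; one must use the monotonicity of $f_{i,2}$, so that $g_i$ has the favourable sign precisely on the set $\{U_i^\lambda<0\}$ where a potential violation lives, \emph{together with} the authentically Gidas--Ni--Nirenberg ingredient of controlling the reflection of $\partial\Sigma_\lambda$ — which is exactly why configuration (II) must be excluded and why regularity of the solutions and of the domain is required, and which keeps that set away from $\{u_i=0\}$, where $f_{i,2}$ may misbehave. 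Carrying this out consistently over all components, using the cooperativity $(H_5)$ so that the vectorial maximum principle and the component-wise SMP both apply, is the delicate point.
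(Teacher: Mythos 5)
Your Step~1 (deriving the differential inequality for $U_i^\lambda$ and splitting $f_i$ via $(H_4)$--$(H_5)$) matches the paper's computation in~\eqref{conta f}, and your Step~3 is the standard endgame. The genuine gap is in Step~2: both the start of the moving planes (``for $\lambda$ near $R$ the cap has small measure'') and the sliding (``thin collar where the small-domain principle applies'') invoke a Berestycki--Nirenberg small-domain maximum principle, and the paper is explicit — in the paragraph between Remark~\ref{Remark lambda>0} and $(H_7)$ — that this route is \emph{not} available here precisely because it needs a Lipschitz bound on $f_i$ which $(H_4)$ does not provide. Your ``Main obstacle'' paragraph identifies the difficulty but the proposed repair is wrong: on $\{U_i^\lambda<0\}$ the term $g_i=f_{i,2}(x,u_i^\lambda,\ldots)-f_{i,2}(x,u_i,\ldots)$ satisfies $g_i\le 0$, so after moving it to the right-hand side it contributes a \emph{nonnegative} source term $-g_i\ge 0$ in the supersolution inequality for $U_i^\lambda$ — the unfavourable sign. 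Since $f_{i,2}$ is only nondecreasing, $|g_i|$ is not controlled by $C|U_i^\lambda|$, so ABP in a small domain yields only $\inf U_i^\lambda\ge -\varepsilon$ rather than $\inf U_i^\lambda\ge 0$, and the argument does not close. The same obstruction already kills the start of the planes, since near $\partial\Omega$ both $u_i$ and $u_i^\lambda$ are small, which is exactly where $f_{i,2}$ may fail to be Lipschitz.

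The paper's proof therefore does not use the small-domain principle at all. It starts the planes by the original Gidas--Ni--Nirenberg boundary argument, exploiting $C^2(\overline\Omega)$ regularity: either $f_i(\cdot,0,\ldots,0)\ge 0$ and Hopf gives $\partial_\nu u_i>0$ on $\partial\Omega$ directly, or there is a point where $f_i(\cdot,0,\ldots,0)<0$ and one studies the spectrum of $D^2u_i$ at a point $x_0$ with $Du_i(x_0)=0$ on $\partial\Omega$ to force $u_{i,x_1x_1}(x_0)>0$, contradicting the assumed failure of \eqref{eq:AF1}. To show $\Lambda=0$ it does not try to propagate $U_i^\lambda\ge 0$ past $\Lambda$; instead it derives strict negativity of $u_{i,x_1}$ on $T_{\Lambda-\varepsilon}$ (interior by compactness, boundary by the Step~1 argument again), then reaches a contradiction via a sequence $x_k$, the mean value theorem, and continuity of $u_{i,x_1}$. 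Lemma~\ref{SMP e Hopf sigma lambda} is only invoked \emph{after} $U_i^\lambda\ge 0$ is known, at which stage $g_i\ge 0$ can be discarded and the SMP for $\widetilde{\mathcal{L}}^-[\,U_i^\lambda\,]\le d_f U_i^\lambda$ applies cleanly. Replacing your Step~2 by this two-part GNN argument is what is required to make the proof work under $(H_4)$.
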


Notice that a ball is the simplest example of a domain satisfying the hypotheses of Proposition \ref{th elipse}.
Furthermore, observe that this proposition and $(H_0)$ produce the radial symmetry and monotonicity of Theorem \ref{Tbola}. Indeed, concerning an arbitrary direction $w$, we reduce it to the case of the $x_1$ direction via rotation $x\mapsto xQ$, where $Q$ is a $N \times N$ orthogonal matrix such that $wQ =e_1$. The symmetry of $u_i$ in the direction $w$ follows from the symmetry of $W_i(x) := u_i(xQ)$ with respect to the plane $x_1=0$. 

If, moreover, the operators $F_i$,  $1 \leq i \leq n$, are continuously differentiable in the entry $X \in \mathbb{S}^N(\mathbb{R})$, the following more general symmetry result in one direction can be accomplished.

\begin{teo} \label{Tx1}
Let $\Omega$ be a bounded $ C^{2}$ domain satisfying \eqref{Omega x1}. Assume that $F_i\,(x,p,X)$ is continuously differentiable in $X$, and that $f_i$ and $F_i$ verify \eqref{Hx1}, for all $1\leq i\leq n$.
Let $(u_1, \ldots , u_n)$  be a solution of \eqref{P}, where $u_i$'s are  $C^2(\overline{\Omega})$ functions. Then, for each $1\leq i\leq n$, $u_i$ is symmetric in the $x_1$ direction and $\partial_{x_1} u_i <0$, for all $x\in \Omega$ with $x_1>0$. 
\end{teo}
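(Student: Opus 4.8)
\noindent The plan is to run the classical Gidas-Ni-Nirenberg moving planes method in the $x_1$ direction, following the scheme of Proposition~\ref{th elipse} up to the point where the moving plane meets the stopping configuration $\mathrm{(II)}$; handling that configuration is the genuinely new ingredient, and is precisely where the hypothesis that $F_i$ be $C^1$ in $X$ enters, through Serrin's corner lemma. For $\lambda\in(\Lambda_2,R)=(0,R)$ set $U_i^\lambda=u_i^\lambda-u_i$ on $\Sigma_\lambda$. Using \eqref{eq ulambda} (a consequence of $(\widetilde{H}_0)$), the equation for $u_i$, a first-order Taylor expansion in the $X$-entry — valid since $F_i$ is $C^1$ there, and which yields a \emph{continuous} uniformly elliptic matrix $A_i^\lambda(x)$ — and the Lipschitz bound in $p$ of $(H_1)$, one writes $F_i(x,Du_i^\lambda,D^2u_i^\lambda)-F_i(x,Du_i,D^2u_i)=\mathrm{tr}(A_i^\lambda(x)\,D^2U_i^\lambda)+\widetilde{b}_i^\lambda(x)\cdot DU_i^\lambda$ with $\widetilde{b}_i^\lambda$ bounded; combined with $(H_2)$, the splitting $f_i=f_{i,1}+f_{i,2}$ of $(H_4)$ and the cooperativity $(H_5)$ expressed through \eqref{cross}, this makes $(U_1^\lambda,\dots,U_n^\lambda)$ a solution of a cooperative linear elliptic system of differential inequalities in $\Sigma_\lambda$, with $U_i^\lambda\ge 0$ on $\partial\Sigma_\lambda$ (it vanishes on $T_\lambda\cap\Omega$, and equals $u_i^\lambda\ge 0$ on $\partial\Omega\cap\{x_1>\lambda\}$ since $\Sigma_\lambda^\lambda\subset\Omega$ by \eqref{Omega x1}). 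As in Proposition~\ref{th elipse}, the maximum principle in small domains starts the process for $\lambda$ close to $R$ — using $(H_4)$ to discard the monotone part $f_{i,2}$ on the set $\{U_i^\lambda<0\}$ and so keep only bounded coefficients there — so that $\lambda^\ast:=\inf\{\lambda\in(0,R):U_i^\mu\ge 0\text{ in }\Sigma_\mu\text{ for all }i\text{ and all }\mu\in[\lambda,R)\}$ is well defined, $U_i^{\lambda^\ast}\ge 0$ in $\Sigma_{\lambda^\ast}$, and, if $\lambda^\ast>0$, Remark~\ref{Remark lambda>0} and the SMP give $U_i^{\lambda^\ast}>0$ in $\Sigma_{\lambda^\ast}$ for all $i$.

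\noindent Next I would derive a contradiction from the assumption $\lambda^\ast>0$. If $U_i^{\lambda^\ast-\varepsilon}\ge 0$ in $\Sigma_{\lambda^\ast-\varepsilon}$ held for every $i$ and all small $\varepsilon>0$, the plane would move past $\lambda^\ast$, against its definition; hence along some $\varepsilon_n\downarrow 0$ and, after a subsequence, for one fixed $i$, $\min_{\overline{\Sigma_{\lambda^\ast-\varepsilon_n}}}U_i^{\lambda^\ast-\varepsilon_n}<0$. Since $U_i^{\lambda^\ast-\varepsilon_n}\ge 0$ on $\partial\Sigma_{\lambda^\ast-\varepsilon_n}$ (here $\lambda^\ast-\varepsilon_n>0$ and \eqref{Omega x1} ensure $x^{\lambda^\ast-\varepsilon_n}\in\Omega$ whenever $x\in\partial\Omega$ and $x_1>\lambda^\ast-\varepsilon_n$), this minimum is attained at an interior point $x_n\in\Sigma_{\lambda^\ast-\varepsilon_n}$, so $DU_i^{\lambda^\ast-\varepsilon_n}(x_n)=0$ and $D^2U_i^{\lambda^\ast-\varepsilon_n}(x_n)\ge 0$; along a further subsequence $x_n\to x_0\in\partial\Sigma_{\lambda^\ast}$, and by $C^2(\overline{\Omega})$ regularity $U_i^{\lambda^\ast}(x_0)=0$, $DU_i^{\lambda^\ast}(x_0)=0$, $D^2U_i^{\lambda^\ast}(x_0)\ge 0$. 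Now $x_0\notin\partial\Omega\cap\{x_1>\lambda^\ast\}$ (there $U_i^{\lambda^\ast}(x_0)=u_i^{\lambda^\ast}(x_0)>0$) and $x_0\notin T_{\lambda^\ast}\cap\Omega$ (Hopf on that flat piece would force $\partial_{x_1}U_i^{\lambda^\ast}(x_0)>0$, against $DU_i^{\lambda^\ast}(x_0)=0$), so $x_0$ is a corner point $p\in T_{\lambda^\ast}\cap\partial\Omega$. As $p^{\lambda^\ast}=p$ and $u_i\in C^2(\overline{\Omega})$, one computes $DU_i^{\lambda^\ast}(p)=(-2\,\partial_{x_1}u_i(p),0,\dots,0)$ and $D^2U_i^{\lambda^\ast}(p)=M\,D^2u_i(p)\,M-D^2u_i(p)$ with $M=\mathrm{diag}(-1,1,\dots,1)$; the latter has vanishing $(1,1)$-entry and vanishing block on the indices $\ge 2$, its only possibly nonzero entries being $(D^2U_i^{\lambda^\ast}(p))_{1k}=-2\,\partial_{x_1x_k}u_i(p)$ for $k\ge 2$. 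From $DU_i^{\lambda^\ast}(p)=0$ we get $\partial_{x_1}u_i(p)=0$, hence $\nu(p)\cdot e_1=0$: the corner $p$ is one where $\partial\Omega$ meets $T_{\lambda^\ast}$ orthogonally, i.e.\ configuration $\mathrm{(II)}$ holds at $\lambda^\ast$ — if $\mathrm{(II)}$ is excluded (as in Proposition~\ref{th elipse}), this is already the contradiction. In the orthogonal-corner case, a symmetric matrix of the above off-diagonal shape can be $\ge 0$ only if it is the zero matrix, so $D^2U_i^{\lambda^\ast}(p)=0$. Thus $U_i^{\lambda^\ast}$ is positive in $\Sigma_{\lambda^\ast}$, vanishes together with its gradient and Hessian at the corner $p$ between the orthogonal faces $T_{\lambda^\ast}\cap\Omega$ and $\partial\Omega$, and — after absorbing the nonnegative terms $c_{ij}^{\lambda^\ast}U_j^{\lambda^\ast}$ ($j\ne i$) and the nonnegative $f_{i,2}$-contribution onto the left-hand side — is a supersolution of a linear uniformly elliptic operator with \emph{continuous} leading coefficients $A_i^{\lambda^\ast}$ and nonpositive zeroth-order term. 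This is the setting of Serrin's corner lemma \cite[Lemmas~1 and 2]{Serrin}, which produces a direction $s$ entering $\Sigma_{\lambda^\ast}$ at $p$ along which the second derivative of $U_i^{\lambda^\ast}$ is strictly positive at $p$ — impossible, since $DU_i^{\lambda^\ast}(p)=0$ and $D^2U_i^{\lambda^\ast}(p)=0$. Therefore $\lambda^\ast=0$.

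\noindent It then remains to conclude: $U_i^\lambda\ge 0$ in $\Sigma_\lambda$ for all $\lambda\in(0,R)$, and letting $\lambda\downarrow 0$ gives $u_i(-x_1,x')\ge u_i(x_1,x')$ on $\{x_1>0\}$; applying the same conclusion to the reflected solution $x\mapsto u_i(-x_1,x')$ — which solves \eqref{P} by \eqref{Omega x1} and $(\widetilde{H}_0)$ — yields the reverse inequality, so each $u_i$ is symmetric in the $x_1$ direction. For the strict monotonicity one fixes $\lambda\in(0,R)$ and uses $U_i^\lambda>0$ in $\Sigma_\lambda$ together with $U_i^\lambda=0$ on the flat piece $T_\lambda\cap\Omega$ of $\partial\Sigma_\lambda$: Hopf gives $\partial_{x_1}U_i^\lambda>0$ there, and since on $T_\lambda$ this equals $-2\,\partial_{x_1}u_i$, one obtains $\partial_{x_1}u_i<0$ on $T_\lambda\cap\Omega$, hence on all of $\Omega\cap\{x_1>0\}$ as $\lambda$ ranges over $(0,R)$. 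I expect the main obstacle to be the corner case $\mathrm{(II)}$: pinning down the precise off-diagonal shape of $D^2U_i^{\lambda^\ast}(p)$, verifying that $U_i^{\lambda^\ast}$ genuinely satisfies a \emph{linear} elliptic inequality with continuous coefficients — which is exactly where the hypothesis that $F_i$ be $C^1$ in $X$ cannot be dispensed with — and correctly invoking Serrin's corner lemma there.
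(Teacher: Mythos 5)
The fatal gap is in Step~1, the start of the moving planes. You invoke ``the maximum principle in small domains'' and claim that $(H_4)$ lets you ``discard the monotone part $f_{i,2}$ on the set $\{U_i^\lambda<0\}$.'' This is precisely the Berestycki--Nirenberg route, which the paper deliberately avoids because the $f_i$ are not Lipschitz --- and the proposed discard has the sign backwards. On $\{U_i^\lambda<0\}$ one has $u_i^\lambda<u_i$, so, $f_{i,2}$ being nondecreasing in $u_i$, the contribution $f_{i,2}(x,\ldots,u_i^\lambda,\ldots,Du_i^\lambda)-f_{i,2}(x,\ldots,u_i,\ldots,Du_i^\lambda)\le 0$ sits with a \emph{nonpositive} sign on the right of $-\mathcal{L}^-[U_i^\lambda]\ge(\cdots)$, and carries no Lipschitz bound: you can neither drop it from a ``$\ge$'' nor absorb it into a bounded zeroth-order coefficient, so the small-domains/ABP step does not close. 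The paper runs Step~1 the Troy/Gidas--Ni--Nirenberg way: it proves $\partial_{x_1}u_i<0$ directly near the cap $x_1$ close to $R$ using only $C^2(\overline{\Omega})$ regularity, splitting via $(H_7)$ into the case $f_i(\cdot,0,\ldots,0)\ge 0$ (Hopf applied to $u_i$) and the case $f_i(\cdot,0,\ldots,0)<0$ (a second-order analysis of the spectrum of $D^2u_i$ at a boundary point where $Du_i=0$). This is exactly what permits $(H_4)$ to demand only monotonicity of $f_{i,2}$. A related soft spot is your inference ``$\partial_{x_1}u_i(p)=0$, hence $\nu(p)\cdot e_1=0$'': this needs $Du_i(p)\neq 0$, which again comes from Hopf for $u_i$ and hence from the same $(H_7)$ dichotomy; without it the acute corner $\nu_1(p)<0$ is not excluded.

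Modulo repairing Step~1, your Step~2 is a genuinely different and arguably cleaner route than the paper's. The paper's Case~1 applies Serrin's corner lemma to obtain $\partial_s^2U^\Lambda(z)>0$, reads off $u_{x_1x_N}>0$ near $z$, and integrates along segments to contradict $\partial_\nu u\ge 0$ on $\partial\Omega$. You instead take interior minimum points $x_n$ of $U_i^{\lambda^*-\varepsilon_n}$, pass to the limit at a corner $p$ to get $U_i^{\lambda^*}(p)=0$, $DU_i^{\lambda^*}(p)=0$ and $D^2U_i^{\lambda^*}(p)\ge 0$, and use the reflection structure of $D^2U_i^{\lambda^*}(p)$ (zero $(1,1)$ entry, zero block on indices $\ge 2$, only the $(1,k)$, $k\ge 2$, possibly nonzero) to force $D^2U_i^{\lambda^*}(p)=0$, contradicting Serrin's lemma with no integration step. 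What you must still supply, as the paper's Lemma~\ref{uso serrin} does, is the symmetric linearization with continuous principal part satisfying $a_{1k}=0$ on $T_{\lambda^*}$ for $k\ge 2$ --- the exact place where $C^1$-dependence of $F_i$ on $X$ enters.
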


For ease of notation, we provide a proof in the case $n=2$, with $u_1,u_2$, $f_1,f_2$, and $F_1,F_2$ are replaced by $u,v$, $f,g$, and $F,G$, respectively. 
We indicate the respective changes that appear from the incorporation of other components when necessary.
The system is rewritten as
\begin{align}\label{Q} \tag{Q}
\left\{
\begin{array}{rclcc}
F (x,Du,D^2u) +f (x,u,v, Du)&=&0 &\mbox{in} & \;\Omega \\
G (x,Du,D^2u) +g (x,u,v, Dv)&=&0 &\mbox{in} & \;\Omega \\
u,\, v\in C^2(\overline{\Omega}), \,\;\; u,\, v &>& 0 &\mbox{in} & \;\Omega \\
u,\, v &=& 0 &\mbox{on} & \partial\Omega.
\end{array}
\right.
\end{align}
For the decomposition in $(H_4)$ we just rewrite $f=f_1+f_2$ and $g=g_1+g_2$.

\smallskip

Then we consider
$
U^\lambda :=u^\lambda-u , \; V^\lambda :=v^\lambda - v,
$
for $u^\lambda (x)=u(x^\lambda)$, $v^\lambda (x)=v(x^\lambda)$.

We will need the following lemma, which is a consequence of SMP and Hopf. It concerns the domain $\Sigma_\lambda$ when $\lambda >\Lambda_2$, and solutions of \eqref{Q}. 

\begin{lem}{$($SMP and Hopf in $\Sigma_\lambda )$}\label{SMP e Hopf sigma lambda}
Let $\lambda >\Lambda_2$ and let $u,v$ be a pair of solutions of \eqref{Q}. Assume $U^\lambda$ and $V^\lambda$ nonnegative in $\Sigma_\lambda$ and  \eqref{Hx1}.
\smallskip
Then $U^\lambda, V^\lambda> 0$ in $\Sigma_\lambda$ and $u_{x_1} ,\, v_{x_1} <0$ on $T_\lambda$. 

\end{lem}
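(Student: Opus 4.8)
The plan is to derive a closed linear differential inequality for the vector $(U^\lambda, V^\lambda)$ in $\Sigma_\lambda$ to which the strong maximum principle and Hopf lemma for Pucci-type operators apply componentwise. First I would use \eqref{eq ulambda} (obtained from $(\widetilde H_0)$) together with the equation satisfied by $u$ to estimate $F_i$ of the difference $U^\lambda$: subtracting, and using the structural bound $(H_1)$ on $F$, one gets
\begin{align*}
\widetilde{\mathcal{L}}^-[U^\lambda] \le F(x,Du^\lambda,D^2u^\lambda) - F(x,Du,D^2u) + (\gamma+\mu)|DU^\lambda| \le f(x,u,v,Du) - f(x,u^\lambda,v^\lambda,Du^\lambda)
\end{align*}
in $\Sigma_\lambda$ (and similarly for $V^\lambda$ with $G$, $g$). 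Then I would absorb the gradient term $f(x,u^\lambda,v^\lambda,Du)-f(x,u^\lambda,v^\lambda,Du^\lambda)$ using $(H_2)$, which contributes the extra $\mu|DU^\lambda|$ that turns $\mathcal{L}^-$ into $\widetilde{\mathcal{L}}^-$; here $\sigma = 2\max(\|u\|_{C^1},\|v\|_{C^1})$ controls all arguments since $u^\lambda,v^\lambda$ are reflections of $u,v$.

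The core algebraic step, and the one where the hypothesis structure really enters, is to rewrite the remaining zeroth-order difference
\begin{align*}
f(x,u^\lambda,v^\lambda,Du) - f(x,u,v,Du)
\end{align*}
as a cooperative linear combination of $U^\lambda$ and $V^\lambda$. I would split it through the intermediate point $(u,v^\lambda)$: the $v$-increment is handled by the cross term $c_{12}$ of \eqref{cross}, which is $\ge 0$ by $(H_5)$; the $u$-increment is split via $(H_4)$ as $f_1+f_2$, where the $f_1$ part yields a bounded coefficient $c_{11}(x)$ by the uniform local Lipschitz condition, and the $f_2$ part has the favorable sign since $f_2$ is nondecreasing in $u$ and $u^\lambda \ge u$ on $\Sigma_\lambda$. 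Altogether one obtains
\begin{align*}
\widetilde{\mathcal{L}}^-[U^\lambda] + c_{11}(x)\, U^\lambda \le -\,c_{12}(x)\,V^\lambda - (\text{nonnegative}) \le 0 \quad\text{where } V^\lambda \ge 0,
\end{align*}
so $\widetilde{\mathcal{L}}^-[U^\lambda] + c_{11}^-(x) U^\lambda \le 0$ with $U^\lambda \ge 0$ in $\Sigma_\lambda$, and symmetrically for $V^\lambda$. The point to be careful about is that we need $U^\lambda$ and $V^\lambda$ \emph{simultaneously} nonnegative so that the cross terms have the right sign — but that is exactly the hypothesis of the lemma.

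Now SMP applied to each inequality gives the dichotomy: in each connected component of $\Sigma_\lambda$, either $U^\lambda \equiv 0$ or $U^\lambda > 0$ (same for $V^\lambda$). If $U^\lambda \equiv 0$ on a component while $V^\lambda \not\equiv 0$ there, the differential inequality forces $c_{12}(x)V^\lambda \le 0$, contradicting $c_{12}\ge 0$ and $V^\lambda>0$ on that component; if both vanish identically on a component, I would invoke Remark \ref{Remark lambda>0} (valid since $\lambda>\Lambda_2$), which produces boundary-reflection points inside $\Omega$ where $U^\lambda>0$, again a contradiction. Hence $U^\lambda, V^\lambda > 0$ throughout $\Sigma_\lambda$. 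Finally, since $T_\lambda \subset \partial\Sigma_\lambda$ and $U^\lambda = V^\lambda = 0$ on $T_\lambda$, Hopf lemma gives $\partial_{x_1}\!\big|_{T_\lambda} U^\lambda < 0$ in the interior normal direction to $\Sigma_\lambda$; computing $\partial_{x_1} U^\lambda = -2u_{x_1}$ on $T_\lambda$ yields $u_{x_1} < 0$ on $T_\lambda$, and likewise $v_{x_1}<0$.

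\textbf{Main obstacle.} I expect the main difficulty to be the bookkeeping in the zeroth-order splitting under the minimal regularity on $f_i$: one must route the differences through the correct intermediate points so that every increment is matched either with a \emph{bounded} coefficient (Lipschitz part of $(H_4)$) or with a \emph{sign-definite} term (the monotone part of $(H_4)$ and the cross-monotonicity $(H_5)$), using only that $u^\lambda \ge u$, $v^\lambda \ge v$ on $\Sigma_\lambda$ — without ever differentiating $f$. This is precisely the computation the authors flag as \eqref{conta f} in the introduction, and getting the signs right there is what makes the non-differentiable, merely-cooperative setting work.
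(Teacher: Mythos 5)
Your proposal is correct and follows essentially the same path as the paper: use $(H_1)$ and the equations (via \eqref{eq ulambda}) to get a one-sided Pucci inequality for $U^\lambda$, split the zeroth-order $f$-difference through an intermediate point so the $u$-increment is handled by $(H_4)$ and the $v$-increment by $(H_5)$ together with $V^\lambda\geq0$, absorb the gradient increment via $(H_2)$ into $\widetilde{\mathcal{L}}^-$, and finish with SMP, Remark~\ref{Remark lambda>0}, and Hopf. The only differences are cosmetic — you route through $(u,v^\lambda)$ rather than the paper's $(u^\lambda,v)$, which is equally valid, and in your first displayed chain the $(\gamma+\mu)|DU^\lambda|$ is misplaced in the middle term (it should be absorbed into $\widetilde{\mathcal{L}}^-$ on the far left, or appear as $+\mu|DU^\lambda|$ on the far right after splitting off the gradient increment), but your surrounding narrative describes the absorption correctly.
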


\begin{proof}
Using $(H_1)$, \eqref{defLcomPucci}, \eqref{eq ulambda}, and $(H_2)$ we obtain
\begin{align}\label{conta f}
-\mathcal{{L}}^- [\,U^\lambda\,] & \geq - \{\, F(x,Du^\lambda, D^2 u^\lambda\,)-F(x,Du ,D^2 u) \, \} \geq f(x ,u^\lambda,v^\lambda, Du^\lambda)-f(x, u, v, Du)\nonumber \\
& = \{ f(x,u^\lambda,v^\lambda, Du^\lambda)-f(x,u^\lambda,v, Du^\lambda) \}
+\{f_1(x,u^\lambda,v,Du^\lambda) - f_1(x, u, v,Du^\lambda) \}
 \nonumber\\
&  +\{f_2(x,u^\lambda,v,Du^\lambda) - f_2(x, u, v,Du^\lambda) \}
+\{f(x ,u,v, Du^\lambda) -f(x, u, v, Du)\}  \nonumber \\
& \geq c_{12} (x,V^\lambda,Du^\lambda) \,V^\lambda  
-d_f \, U^\lambda -\mu |DU^\lambda|  \textrm{ in } \Sigma_\lambda,
\end{align}
since $f_2$ is nondecreasing in $u$. Here $d_f$ is the uniform Lipschitz constant of $t\mapsto f_1(x,t, v,p)$ for $t \in [0,\sup_{\Omega} u]$, $p\in \overline{B}_{\sigma} (0)$ with $\sigma =2\|u\|_{C^1(\overline{\Omega})}$ and $c_{12}$ comes from \eqref{cross}. 
Using further $c_{12}(x,V^\lambda,Du^\lambda)V^\lambda \geq 0$, we derive
$\mathcal{\widetilde{L}}^- [\,U^\lambda\,] \leq d_f \, U^\lambda$, see \eqref{defLcomPucci}. 
Then, SMP entails either $U^\lambda >0$ in $\Sigma_\Lambda$ or $U^\lambda \equiv 0$ in $\Sigma_\lambda$.
Thus, by Remark~\ref{Remark lambda>0}, $U^\lambda >0$ in $\Sigma_\lambda$.
Since the interior unit normal vector on $\partial \Sigma_\lambda \cap T_\lambda$ is $\nu = e_1$, Hopf yields $0< \partial_{\nu} \, U^\lambda = -2\, u_{x_1}$ on $T_\lambda$. 

The proof of $V^\lambda >0$ in $\Sigma_\lambda$ is analogous, by considering \eqref{Hx1} for $g$, and $d_g$ as the uniform Lipschitz constant of $s \mapsto g_1(x,u,s,p)$ for $s \in [0,\sup_{\Omega} v]$, $p\in \overline{B}_{\sigma} (0)$.
\end{proof}

\begin{proof}[Proof of Proposition {\rm{\ref{th elipse}}}.] 
We split the proof in two steps.

\smallskip

{ {\textit{Step 1: Start moving the planes.}}}
\smallskip

Notice that the first component of the interior unit normal vector at a boundary point $x$, denoted by $\nu_1(x)$, is negative for any $x\in \partial\Omega$ which is close to points of the form $(R,x^\prime)$, where $R=\sup_{x\in\Omega} x_1$ from Section \ref{Preliminaries}. Fix $x_0=(R,x_0^\prime)$ and $\varepsilon>0$ such that this property remains true for all points on $\partial \Omega \cap B_\varepsilon (x_0)$.

By taking a smaller $\varepsilon$ if necessary (independent of $u$), we are going to show that 
\begin{equation}\label{eq:AF1}
u_{x_1}<0 \;\textrm{ on }\; \Omega \cap B_\varepsilon (x_0).
\end{equation}
Repeating the same for $v$, it will imply $U^\mu,V^\mu >0$ in $\Sigma_\mu$, $\mu\in (\lambda,R)$, for values of $\lambda$ close to $R$.

\smallskip

\textit{Case 1. Assume first $f(x,0,0,0)\geq 0$, for all $x\in \Omega$.}
In this case, since $F(x,0,0)\equiv 0$, 
\begin{align*}
-\mathcal{L}^-[u]  &\geq - F(x,Du,D^2 u)=f(x,u,v,Du)\geq f(x,u,v,Du)-f(x,0,0,0)\\
& =\{ f(x,u,v,Du)-f(x,u,0,Du)\}+\{f_1(x,u,0,Du)-f_1(x,0,0,Du)\} \\
& +\{f_2(x,u,0,Du)-f_2(x,0,0,Du)\}+\{f(x,0,0,Du)-f(x,0,0,0)\}\\
&\geq c_{12}(x,v,Du)v-d_f\, u - \mu |Du|.
\end{align*}
Then $\mathcal{\widetilde{L}}^- [u] \leq d_f  u$ and Hopf give rise to $\partial_\nu u >0$ on $\partial\Omega$; by a covering argument we obtain \eqref{eq:AF1}. 

\smallskip
\textit{Case 2. Suppose that there exists some $\tilde{x}\in \Omega$ such that $f(\tilde{x},0,0,0)<0$. }

In this case, by assuming that \eqref{eq:AF1} is not true, there exists a sequence of points $z_k$ in $\Omega$ converging to $x_0$ such that $ u_{x_1}(z_k)\geq 0$. 
Observe that $u>0$ in $\Omega$ and $u = 0$ on $\partial \Omega$ imply $ u_{x_1} (x_0) \leq 0$. Hence, $u_{x_1}(x_0)=0$. This and the fact that the gradient of $u$ is parallel to the unit normal vector  at $x_0$ yield $Du(x_0)=0$.

For each $k$, the segment in the positive $x_1$ direction from $z_k$ intersects $\partial\Omega \cap B_\varepsilon (x_0)$ at a point $y_k$ where $ u_{x_1} (y_k)\leq 0$. The mean value theorem gives $\xi_k$ such that
$
0\leq u_{x_1}(z_k)-u_{x_1}(y_k)=u_{x_1 x_1}(\xi_k) \,(z_{k,1}-y_{k,1}),
$
from where $ u_{x_1 x_1}(\xi_k)\leq 0$, and so $u_{x_1 x_1}(x_0)\leq 0$ in the limit.

\smallskip
Fix an arbitrary $t_0\in T_{x_0}$.
As in \cite{Troy}, let $T_{x}$ be the tangent space to $\partial\Omega$ at $x$, and let $x(s)$ be a fixed path on $\partial\Omega$ such that $x(0)=x_0$, $\dot x (0)=t_0$. From $u(x(s))=0$ for all $s$, it follows that $Du(x(s))\cdot \dot x (s)=0$ for all $s$ as well. Furthermore,
\[
Du(x) \cdot t =0 \textrm{ for all } x \in \partial \Omega \textrm{ and } t \in T_x.
\]
Take some $t_1\in T_{x_0}$ and let $t(s)$ be a path with  $t(s) \in T_{x(s)}$ and $t(0)=t_1$. From $Du(x(s))\cdot t(s)=0$ for all $s$, by differentiating it and using $Du(x_0)=0$, yields
\begin{align}\label{4.8}
t_1\cdot (D^2u(x_0)\cdot t_0)=0.
\end{align}
Next, since the function $\varphi (s):=Du(x(s))\cdot \nu (x(s))=\partial_\nu u(x(s))$ has a minimum equal to zero at $0$, then $\varphi^\prime (0)=0$, from where
$\nu (x_0)\cdot (D^2u(x_0)\cdot t_0)=0$.
This and \eqref{4.8} imply that $D^2u(x_0)\cdot t_0=0$, where $t_0\in T_{x_0}$ is arbitrary. Consequently, $0$ is an eigenvalue of order $N-1$ for $D^2 u (x_0)$. 
Namely, let $\{e,0,\ldots,0\}$ be the spectrum of $D^2 u (x_0)$.

We claim that $e>0$. Indeed, if $e\leq 0$, then $\mathcal{M}^+_1(D^2u(x_0))=e\alpha_1 \leq 0$. Hence
\begin{align}\label{conta f com x}
0\leq -\mathcal{M}^+_1(D^2u(x_0)) \leq -F(x_0,0,D^2 u(x_0)\,)=f(x_0,0,0,0).
\end{align}
Consider the point $\tilde{z}$ in which the line segment on the $x_1$ direction from $\tilde{x}$ hits $\partial\Omega$. By monotonicity,
\begin{align}\label{z tilde negativo}
f(\tilde{z},0,0,0)\leq f(\tilde{x},0,0,0)<0.
\end{align}
Clearly, \eqref{conta f com x} and \eqref{z tilde negativo} contradict the hypothesis $(H_5)$. Thus, the claim $e>0$ is proved. 

Now, a basis of $\real^N$ composed of $\nu(x_0)$ and $\{a_l\}_{1\leq l\leq N-1}$ (orthonormal basis of $T_{x_0}$) applied to the above calculations ensures that $D^2 u(x_0)=(e \,\nu_i \nu_j)_{ij}$. 
In particular, $u_{x_1 x_1} (x_0)=e\,\nu_1(x_0)\nu_1(x_0)$ is positive, which contradicts $u_{x_1x_1}(x_0)\le 0$. We so conclude \eqref{eq:AF1} and Step 1.

\medskip

Actually, Step 1 is completely independent and it only uses the regularity of the solutions up to the boundary, along with the regularity of the boundary itself. That is why we kept it in terms of $\Lambda_2$. Recall $\Lambda_2=\Lambda_1=0$ under hypotheses of Proposition \ref{th elipse}. 

Therefore, by Step 1, it is well defined the following quantity
\begin{equation*}
\Lambda = \inf \{ \,\lambda >0; \; U^\mu >0, \; V^\mu >0 \textrm{ \,in\, } \Sigma_\mu\, ,  \textrm{ for all } \mu\in (\lambda,R)\,\}.
\end{equation*}
By continuity, $U^\Lambda \geq 0$ and $V^\Lambda \geq 0$ in $\Sigma_\Lambda$. 
\medskip

{ {\textit{ Step 2: Stop moving the planes at zero.}}}
\smallskip

The goal is to show that $\Lambda =0$. Suppose $\Lambda >0$ in order to obtain a contradiction. Thus, by Lemma \ref{SMP e Hopf sigma lambda}, we have that $U^\Lambda, V^\Lambda>0$ in $\Sigma_\Lambda$ and $u_{x_1} , v_{x_1} <0$ on $T_\Lambda$. Furthermore, there exists some $\varepsilon>0$ such that
\begin{equation}\label{eq:newderivative}
u_{x_1} , v_{x_1} <0 \;\textrm{ on }  \;T_{\Lambda-\varepsilon}.
\end{equation}
Indeed, $u_{x_1} , v_{x_1} <0$ on $T_{\Lambda-\varepsilon}\cap K$, for any compact $K\subset \Omega$, with $\varepsilon=\varepsilon (K)$. On a neighborhood of the boundary points $x\in \partial\Omega\cap \{x_1=\Lambda-\varepsilon\}$, we can apply exactly the same argument used to derive \eqref{eq:AF1}. This ensures \eqref{eq:newderivative}. 

Consider, then, a sequence of points $x_k\in \Sigma_{\lambda_k}$, with $\lambda_k\in (\Lambda - \varepsilon , \Lambda)$, for some $\varepsilon \in (0,\Lambda)$, $\lambda_k \rightarrow \Lambda$ as $k\rightarrow \infty$, such that, for each $k\in \n$, either $U^{\lambda_k} (x_k)\leq 0$ or $V^{\lambda_k}(x_k)\leq 0$.

\smallskip
By passing to a subsequence, say $x_k \rightarrow z \in \overline{\Sigma}_\Lambda$ and $U^{\lambda_k}(x_k)\leq 0$ (at least one of them verifies this for infinite $k$'s). 
Thus, $U^\Lambda (z)\leq 0$.
Now, since $U^\Lambda >0$ in $\Sigma_\Lambda \cup (\partial\Sigma_\Lambda\cap\{x_1>\Lambda\})$, we must have $z\in \partial\Sigma_{\Lambda}\cap \{x_1=\Lambda\}$ and, from \eqref{eq:newderivative}, $u_{x_1}(z)\leq 0$.
Further, since the line segment between $x_k$ and $x^{\lambda_k}_k$ is contained in $\Omega$, the mean value theorem yields $y_k \in \Omega$ such that
\begin{align*}
0\geq U^{\lambda_k} (x_k)= u(x_k^{\lambda_k})-u(x_k)= 2\, u_{x_1}(y_k) (\lambda_k -x_{k,1}).
\end{align*}
Hence $u_{x_1}(y_k)\geq 0$. Here $y_k \rightarrow z$, since both $x_k$ and $x^{\lambda_k}_k$ converge to $z=z^\Lambda$, and this contradicts \eqref{eq:newderivative}.
\end{proof}

For the proof of Theorem \ref{Tx1} we recall the following lemma which is needed in the usual treatment of Serrin's corner lemma for nonlinear operators, as in \cite{GNiN, Serrin, BSoverdet}.

\begin{lem}\label{uso serrin}
Assume that $F_i(x,p,X)$ is $C^1$ in $X$ and satisfies $(\widetilde{H}_0)$, $(H_1)$, for some $i$. Let $\lambda$ be such that $U_i^\lambda>0$ in $\Sigma_\lambda$. Then $\partial^2_{s}\, U_i^\lambda (z)>0$ for any direction $s$ that enters in $\Sigma_\lambda$ at $z$ nontangentially, at any point $z\in \partial\Omega\cap\{x_1=\lambda\}$ in which $T_\lambda$ is orthogonal to $\partial\Omega$.
\end{lem}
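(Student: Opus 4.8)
The goal is a quantitative second-order statement at a boundary point $z$ where the moving plane $T_\lambda$ is orthogonal to $\partial\Omega$; this is precisely the situation (II) in which Serrin's corner lemma is needed. The plan is to argue by contradiction, supposing that $\partial^2_s U_i^\lambda(z) \leq 0$ for some admissible direction $s$ entering $\Sigma_\lambda$ nontangentially at $z$, and to combine this with the equation satisfied by $U_i^\lambda$ together with Serrin's original corner lemma \cite[Lemma 2]{Serrin} (valid because $F_i$ is $C^1$ in $X$, so the linearized operator has continuous coefficients near $z$).

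First I would derive the differential inequality governing $W := U_i^\lambda$ in $\Sigma_\lambda$. Using $(H_1)$, $(\widetilde H_0)$ and \eqref{eq ulambda}, exactly as in the computation \eqref{conta f} of Lemma \ref{SMP e Hopf sigma lambda}, one obtains a linear inequality of the form $\mathcal{\widetilde L}^-[W] \leq d \, W$ in $\Sigma_\lambda$ for a suitable constant $d \geq 0$; because $F_i$ is $C^1$ in $X$, this can moreover be written as $a_{\iota j}(x)\, \partial_{\iota j} W + b_\iota(x)\, \partial_\iota W + c(x)\, W \geq 0$ with bounded measurable coefficients and uniformly elliptic $(a_{\iota j})$, the coefficients being obtained by integrating $D_X F_i$ along the segment joining $D^2 u_i^\lambda$ and $D^2 u_i$. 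Since $W>0$ in $\Sigma_\lambda$ and $W(z)=0$ (as $z\in\partial\Omega\cap\{x_1=\lambda\}$ is a fixed point of the reflection and $u_i$ vanishes there), $z$ is a boundary minimum.

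Next I would set up the geometry at $z$: because $T_\lambda\perp\partial\Omega$ at $z$, the cap $\Sigma_\lambda$ has a right-angle corner at $z$ bounded by the flat piece $T_\lambda$ and the hypersurface $\partial\Omega$, and one verifies that $W$ vanishes to first order along $T_\lambda$ near $z$ — indeed $\partial_{x_1}W \equiv 0$ on $T_\lambda$ because $W$ is, by its very definition as a reflected difference, odd about $\{x_1=\lambda\}$ — while Hopf's lemma applied on $\Sigma_\lambda$ gives $\partial_\nu W>0$ on the relative interior of $\partial\Omega\cap\partial\Sigma_\lambda$ away from $z$. These are exactly the hypotheses of Serrin's corner lemma, which then yields that either $\partial_s W(z)>0$ or $\partial^2_s W(z)>0$ for every direction $s$ pointing into $\Sigma_\lambda$ at $z$; since $Dw(z)$ is normal to $\partial\Omega$ and $s$ enters nontangentially, one checks the first-order term is controlled and the conclusion $\partial^2_s W(z)>0$ survives, contradicting the assumption. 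I expect the main obstacle to be the bookkeeping at the corner: one must confirm that the flatness of $W$ along $T_\lambda$ (from the oddness/reflection structure) is genuinely compatible with the angle condition required by Serrin's lemma, and that the passage from the Pucci inequality $\mathcal{\widetilde L}^-[W]\leq dW$ to a genuine \emph{linear} operator with continuous (or at least bounded) coefficients is legitimate — this is exactly where the $C^1$-in-$X$ hypothesis on $F_i$ is used, as remarked after Theorem \ref{Toverd}. The cross term $c_{12}(x,V^\lambda,Du^\lambda)V^\lambda\geq 0$ from $(H_5)$ is absorbed into the right-hand side just as in \eqref{conta f}, so cooperativity enters only through that sign, and no differentiability of $f_i$ is needed.
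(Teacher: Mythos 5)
Your overall plan---linearize $F_i$ by integrating $D_X F_i$ along the segment from $D^2 u_i$ to $D^2 u_i^\lambda$ and then appeal to Serrin's corner lemma, ruling out the first-order alternative afterward---is the paper's approach, but you skip precisely the verification that makes \cite[Lemma 2]{Serrin} applicable. Serrin's lemma requires the mixed second-order coefficients $a_{1k}$, $k>1$, to vanish along the flat piece $T_\lambda$, and this does not come for free from "bounded measurable, uniformly elliptic" coefficients. The paper obtains it by symmetrizing the integral, writing $a_{jk}$ as the average of $\partial_{X_{jk}}F_i$ at $tD^2u_i+(1-t)D^2u_i^\lambda$ and at $tD^2u_i^\lambda+(1-t)D^2u_i$; then invoking the invariance $F_i(x,p,X)=F_i(x,p,\bar X)$ from $(\widetilde{H}_0)$ to get $\partial_{X_{1k}}F_i(x,p,X)+\partial_{X_{1k}}F_i(x,p,\bar X)=0$ for $k>1$; and finally noting that on $T_\lambda$ one has $D^2u_i^\lambda=\overline{D^2u_i}$, so the two matrix arguments inside the symmetrized integral are bar-reflections of each other and the $a_{1k}$ contributions cancel, giving $a_{1k}=0$ on $T_\lambda\cap\partial\Sigma_\lambda$. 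Without that cancellation, the assertion "these are exactly the hypotheses of Serrin's corner lemma" is unjustified, and this is where both the $C^1$-in-$X$ regularity and the reflectional invariance actually enter.

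There are also two local errors. First, the claim that $\partial_{x_1}U_i^\lambda\equiv 0$ on $T_\lambda$ "because $U_i^\lambda$ is odd" is false: oddness of $U_i^\lambda$ about $\{x_1=\lambda\}$ forces $U_i^\lambda$ and its $T_\lambda$-tangential derivatives to vanish on $T_\lambda$, but $\partial_{x_1}U_i^\lambda$ is the normal derivative to $T_\lambda$ and is generically nonzero there; indeed Lemma \ref{SMP e Hopf sigma lambda} gives $\partial_{x_1}U_i^\lambda=-2\,\partial_{x_1}u_i>0$ on $T_\lambda$ via Hopf. Second, to discard the first-order alternative of Serrin's dichotomy the relevant fact is not that $DU_i^\lambda(z)$ is normal to $\partial\Omega$, but that $DU_i^\lambda(z)=0$: at the corner point $z$, which is fixed by the reflection and where $\nu(z)\perp e_1$, the functions $u_i^\lambda$ and $u_i$ share the same normal derivative to $\partial\Omega$ and both have vanishing tangential derivatives, so $\partial_s U_i^\lambda(z)=0$ for every direction $s$, forcing $\partial_s^2 U_i^\lambda(z)>0$. (The contradiction framing is also unnecessary---Serrin's lemma yields the dichotomy directly.)
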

\begin{proof} Set $p=p(x)=Du_i(x)$. Then by splitting 
$$ \{ F_i\,(x,D u_i^\lambda, D^2 u^\lambda_i) - F_i\,(x,p, D^2 u^\lambda_i) \}+\{F_i\,(x,p, D^2 u^\lambda_i) -F_i\,(x,p, D^2 u_i) \},
$$
and using calculations from equation \eqref{conta f},  $U_i^\lambda$ becomes a solution of a linear equation in the form
\begin{align*}
-\mathrm{tr} (\,A(x)D^2 U_i^\lambda\,)+b(x)\cdot DU_i^\lambda\geq -d_f\, U_i^\lambda \;\textrm{ in\, } \Sigma_\lambda
\end{align*}
where $b$ is bounded, and $A=(a_{ jk})_{ jk}$ has continuous entries in $\overline{\Sigma}_\lambda$. Precisely, $b(x)= DU_i^\lambda(x)$ $(\mu+{\gamma}/ |DU_i^\lambda(x)|) $ if $|DU_i^\lambda(x)|\neq 0$, and $b(x)=0$ otherwise; 
\begin{align*}
&a_{ jk}(x)=\int_0^1 \partial_{X_{jk}} F_i(x,p,tD^2u_i(x)+(1-t)D^2 u_i^\lambda (x) ) \; \mathrm{d} t\\
&=\frac{1}{2}\int_0^1 \{ \partial_{X_{jk}} F_i(x,p,tD^2u_i(x)+(1-t)D^2 u_i^\lambda (x) ) +
 \partial_{X_{jk}} F_i(x,p,tD^2u_i^\lambda (x)+(1-t)D^2 u_i (x) )  \}\; \mathrm{d} t.
\end{align*}

Since $F_i(x,p,X)=F_i(x,p,\bar{X})$, it follows that $\partial_{X_{1 k}}F_i (x,p,X)+\partial_{X_{1 k}}F_i (x,p,\bar{X})=0$ for $k>1$. Hence, 
$a_{1 k}=0$ on $T_\lambda\cap \partial \Sigma_\lambda$ for all $k>1$.
An application of Serrin's lemma \cite[Lemma 2]{Serrin}, at any point $z\in \partial\Omega\cap\{x_1=\lambda\}$ in which $T_\lambda$ is orthogonal to $\partial\Omega$, yields
\begin{align*}
\partial_{s} U_i^\lambda (z)>0 \;\textrm{ or } \; \partial^2_{s} \,U_i^\lambda (z)>0,
\end{align*}
for any direction $s$ that enters in $\Sigma_\lambda$ at $z$ nontangentially.
At such $z$, however, the functions $u_i^\lambda$ and $u_i$ have the same normal derivative to $\partial\Omega$, and zero tangential derivatives, thus $\partial_s U_i^\lambda (z)=0$. Therefore, $\partial^2_{s}\, U_i^\lambda (z)>0$. 
\end{proof} 

\begin{proof}[Proof of Theorem {\rm \ref{Tx1}}.]
The start of moving planes is identical to \textit{Step 1} in the previous proof, since for $\lambda$ sufficiently close to $R$ the domain does not reach position $\Lambda_1$.
For \textit{Step 2}, we need to take situation (II) into account. Assume $\Lambda >0$ in manner to achieve a contradiction.
By Lemma \ref{SMP e Hopf sigma lambda} we have $U^\Lambda>0$ in $\Sigma_\Lambda$ and $u_{x_1} <0$ on $T_\Lambda$.
Analogously to the previous \textit{Step 2}, since $\Lambda_2=0$ is not the infimum $\Lambda$, there are sequences $\lambda_k \in (0,\Lambda)$, $\lambda_k \rightarrow \Lambda$, and $x_k \in \Sigma_{\lambda_k}$ such that $u(x_k)\geq u(x_k^{\lambda_k})$. 
We may suppose $x_k\rightarrow z$ in $\overline{\Sigma}_\Lambda$. 
Then, $U^\Lambda (z)\leq 0$, and so $z\in \partial\Sigma_\Lambda$. 

\smallskip

\textit{Case 1:} $z\in \{x_1=\Lambda\}$. 

Notice that the line segment between $x_k$ and $x_k^{\lambda_k}$ lies in $\Omega$. Then, by the mean value theorem, there exists $y_k$ on it such that 
$0\leq u(x_k)-u(x_k^{\lambda_k})=2 u_{x_1}(y_k)(x_{k,1}-\lambda_k)$, from where $u_{x_1}(y_k)\geq 0$. 
In the limit, $u_{x_1}(z)\geq 0$, and thus $z \in \partial\Omega$. Using again that $\partial_\nu u\geq 0$ on $\partial\Omega$, we have $u_{x_1}(z)\leq 0$, thus $u_{x_1}(z)=0$.

We claim that $T_{\Lambda}$ is orthogonal to $\partial\Omega$ at $z$. 
In fact, otherwise we would necessarily  have $\nu_1 (z)<0$, which in particular is enough to apply Hopf's lemma to $u$ (at $z \in \partial \Omega$) in order to conclude 
$u_{x_1} (z)<0$; but this contradicts $u_{x_1}(z)= 0$. 

Now we are in position of applying Serrin's lemma. By Lemma \ref{uso serrin}, $\partial^2_{s}\, U^\Lambda (z)>0$, for any direction $s$ that enters in $\Sigma_\Lambda$ at $z$ nontangentially. Next we compute it for the direction $s=(1/\sqrt{2},0,\ldots,0,-1/\sqrt{2})$ properly chosen. Without loss of generality, we consider the interior normal as $\nu (z)= -e_N$. Then, at $z$,
\begin{align}\label{conta226}
\partial^2_{s} \,U^\Lambda>0 \;\Leftrightarrow\; (\partial_{x_1}-\partial_{x_N})^2 \,u^\Lambda >(\partial_{x_1}-\partial_{x_N})^2\, u.
\end{align}
Using that $u^\Lambda_{x_1 x_1}=u_{x_1 x_1}$, $u^\Lambda_{x_N x_N}=u_{x_N x_N}$, and $u^\Lambda_{x_1 x_N}=u_{x_1 x_N}$ at $z$, we obtain from \eqref{conta226} that $u_{x_1x_N} >0$ in a neighborhood of $z$. We consider the segment $I_k$ from $x_k$ to the point $z_k$ where it hits $\partial\Omega$ in the $e_N$ direction. Integration on $I_k$ for large $k$ gives $u_{x_1}(z_k)> 0$. However, from $\partial_\nu u\geq 0$ on $\partial\Omega$ we have $ u_{x_1}(z_k)\le 0$ for large $k$, which yields a contradiction.

\smallskip
\textit{Case 2:} $z\in \{x_1>\Lambda\}$.

The first thing to note is that $z^\Lambda\in \partial\Omega$, since $U^\Lambda (z)$ would be positive otherwise. Hence, since $\Lambda>0$, the interior normals $\nu(z)$ and $\nu(z^\Lambda)$ coincide, and these are orthogonal to $e_1$. As in Case~1, w.l.g.\ say $\nu(z)=\nu(z^\Lambda)=-e_N$.
Moreover, since $U^\Lambda(z)=0$, we can apply Hopf to the function $U^\Lambda$ in the domain $\Sigma_\Lambda$, from where $ U^\Lambda_{x_N} (z)<0$.

Observe that, under our contradiction assumption $\Lambda>0$ on the symmetric convex domain $\Omega$, we have $\Sigma^\Lambda_\Lambda\subset \Omega$, and so the segment $I_k$ in the $e_N$ direction from $x_k$ to $\partial\Omega$ is not longer than $I_k^{\lambda_k}$, i.e.\ $|I_k|\leq |I^{\lambda_k}_k|$. 
By the mean value theorem applied to $u$, there exists $\xi_k\in I_k$, $\tilde{\xi}_k\in I_k^{\lambda_k}$ such that $u_{x_N}(\xi_k)|I_k|=-u(x_k)\leq -u(x_k^{\lambda_k})=u_{x_N}(\tilde{\xi}_k)|I_k^{\lambda_k}|$, from where $u_{x_N}(x_k)\leq u_{x_N}(x_k^{\lambda_k})$. 
Passing to limits we obtain $u_{x_N}(z)\leq u_{x_N}(z^\Lambda)$, which contradicts  $ U^\Lambda_{x_N} (z)<0$.
\end{proof}

Next, Theorem \ref{Toverd} is a consequence of the proof of Theorem \ref{Tbola} and \cite[Theorem 2]{Troy}. The proof follows the main ideas there, which we include for the sake of completeness.

\begin{proof}[Proof of Theorem {\rm \ref{Toverd}}.]
Our aim is to prove that $\Omega$ is symmetric with respect to the plane $x_1=\lambda_1$. Then the same argument applied to any other direction will imply that $\Omega$ must be a ball, and so the symmetry is obtained via Theorem \ref{Tbola}.
Observe that exactly the same proof of Proposition \ref{th elipse}, by replacing $0$ by $\Lambda_1$ without the symmetry assumption \eqref{Omega x1} on $\Omega$, gives $\Lambda= \Lambda_1$, where
\begin{equation*}
\Lambda = \inf \{ \,\lambda >0; \; U_i^\mu >0 \textrm{ \,in\, } \Sigma_\mu\, ,  \textrm{ for all } \mu\in (\lambda,R), \, i=1,\ldots,n \,\},
\end{equation*}
and for $\Lambda_1$ as defined in Section \ref{Preliminaries}. This is due to the lack of the situation (II) for all $\lambda\in (\Lambda_1,R)$.

Next, by continuity, we have $U_i^{\Lambda_1}\geq 0$ in $\Sigma_{\Lambda_1}$ for all $i\in \{1,\ldots, n\}$. By SMP, either $U_i^{\Lambda_1}>0$ or $U_i^{\Lambda_1}\equiv 0$ in $\Sigma_{\Lambda_1}$, for each $i$. If the latter occurs for all $i\in \{1,\ldots, n\}$, then $\Omega$ is symmetric with respect to $x_1=\Lambda_1$, and the proof is finished. 

In order to obtain a contradiction, assume that $U_i^{\Lambda_1}>0$ in $\Sigma_{\Lambda_1}$ for some $i\in \{1,\ldots, n\}$.  We claim that, in this case, $\Lambda_1$ is accomplished through situation (II), that is, at a point in which the plane $\{x_1=\Lambda_1\}$ reaches an orthogonal position to $\partial\Omega$. Indeed, if situation (I) happened, then we would have $x_0^{\Lambda_1}\in \partial\Omega$, for some $x_0\in \partial\Sigma_{\Lambda_1}\cap\{x_1>\Lambda_1\}$. So, $U_i^{\Lambda_1}(x_0)=0$ and, by Hopf applied to $U_i^{\Lambda_1}$, would yield $\partial_\nu U_i^{\Lambda_1} (x_0)>0$. However, this contradicts the assumption on the overdetermined problem, namely $\partial_\nu U_i^{\Lambda_1} (x_0)=\partial_\nu u_i (x_0) - \partial_\nu u_i^{\Lambda_1}(x_0)=0$, and the claim is proved. Now Serrin's argument \cite[(a) in p. 307]{Serrin} gives us that $U^{\Lambda_1}_i$ has a zero of second order at $x_0$. Finally, a contradiction is established with Serrin's lemma as in Lemma~\ref{uso serrin} where $\lambda=\Lambda_1$.
\end{proof}

Notice that if the domain satisfies \eqref{Omega x1} and it does not have a point for which case $\mathrm{(II)}$ from Section~\ref{Preliminaries} occurs, then the proofs of Theorems \ref{Tx1} and \ref{Toverd} carry out without using Serrin's lemma.
In particular, they hold true without any differentiability hypothesis on $F_i$. In other words, this comprises a reasonable class of domains in which differentiability can be dropped. For such domains it is also possible to obtain a restatement of Theorem \ref{Tvisc} for viscosity solutions which are continuously differentiable up to the boundary. This is the content of the next proof.

\begin{proof}[Proof of Theorem \ref{Tvisc}]
As in Theorem \ref{Tbola}, we only need to prove a version of Proposition \ref{th elipse} with respect to viscosity $C^1(\overline{\Omega})$ solutions.
We first claim that $u_i$ is a viscosity solution of $F_i(x,Du_i,D^2u_i)+\bar{f}_i(x)=0$ in $\Omega$, where $\bar{f}_i (x):=f_i(x,u_1(x),\ldots,u_n(x),Du_i(x))\in C(\overline{\Omega})$. 

Let us prove the subsolution case; for the supersolution it is analogous.
Assuming the contrary, there exist some $x_0\in\Omega$ and $\phi\in C^{2}(B_s(x_0))$ such that $u_i-\phi$ has a local maximum at $x_0$ but
$F_i(x_0,D \phi(x_0),D^2 \phi(x_0))-\bar{f}_i(x_0)<0$.
By the definition of $u_i$ being a viscosity subsolution of the equation in \eqref{P}, we have that
$F_i(x_0,D \phi(x_0),D^2 \phi(x_0)) +f_i(x_0,u_1(x_0),\ldots,u_n(x_0),D\phi(x_0)) \geq 0$.
Since $u_i-\phi \in C^1(B_s(x_0))$ has a local maximum at $x_0$, we have $D (u_i-\phi)(x_0)=0$.
Thus the last two inequalities produce a contradiction.

Similarly we prove that $w_i=u_i^\lambda$ is viscosity supersolution of $F_i(x,w_1,\ldots , w_n,Dw_i,D^2w_i)+\bar{f}_i^{\lambda}(x)=0$ in $\Omega$ with $\bar{f}^\lambda_i (x):=f_i(x,u_1^\lambda(x),\ldots,u_n^\lambda(x),Du_i^\lambda(x))$, for $i=1, \ldots, n$.

Consider for simplicity $n=2$, $u_1 =u$ and $u_2 =v$. For the r.h.s. $\bar{f}_i^\lambda - \bar{f}_i$ we proceed as in \eqref{conta f}, which produces an extra term of first order $\mu |DU^\lambda|$. Meanwhile, the l.h.s. is treated in the viscosity sense by using Proposition 2.1 in \cite{BdaLio}.
In other words, $w=U^\lambda$ is a viscosity solution of $-\mathcal{L}^-[w]\geq -d_f U_\lambda -\mu |DU^\lambda|$ in $\Sigma_\lambda$.

Next we infer that such term in the r.h.s. can be retreated as being part of the Pucci operator in the l.h.s., that is, $U^\lambda$ is a viscosity solution of 
$\widetilde{\mathcal{{L}}}^- [\,U^\lambda\,] \leq d_f U^\lambda$.
Indeed, this is again a consequence of the differentiability of $u$ and the definition of viscosity solution.

Hence we carry on the rest of the proof  of Proposition \ref{th elipse} in the same way by using SMP and Hopf for viscosity solutions in order to establish the desired symmetry.
\end{proof}

\section{Discussion and further applications}\label{secao aplicacoes}

We stress that both works \cite{Troy} and \cite{Shaker} use the differentiability of $f_i$ in order to obtain radial symmetry for systems.
In fact, a typical example already discussed in Section~\ref{Introduction} is the Lane-Emden system \eqref{LE}, in which it is clearly important to contemplate cases where exponents can be less than one.

Alternatively, there are other relevant applications to systems with nondifferentiable terms. For instance, in \cite{Conti1, Conti2} the authors developed an analysis about the behavior between different species $u$ and $v$ that cohabit in $B$, in particular from the following systems
\begin{align}\label{eq.terracini}
\left\{
\begin{array}{rclcc}
-\Delta u &=& u^3 -\beta\, uv^2 \;&\mbox{in} & \;B \\
-\Delta v  &=& v^3 -\beta \, u^2 v\; &\mbox{in} & \;B \\
u,\, v &=& 0\; &\mbox{on} & \partial B.
\end{array}
\right.
\end{align}
where $N=2,3$ and $\beta\in \real$. The system  \eqref{eq.terracini} can be treated variationally as long as the right hand sides are written as $F_u (u,v)$ and $F_v (u,v)$ respectively, where
$
4F(u,v)={u^4}+{v^4} -2 {\beta} u^2v^2.
$
From their study it can be derived the more general problem
\begin{align}\label{eq.terracini.geral}
\left\{
\begin{array}{rclcc}
-\Delta u &=& u^{p-1} -\beta\, u^{r-1}v^s \;&\mbox{in} & \;B \\
-\Delta v  &=& v^{p-1} -\beta \, u^r v^{s-1}\; &\mbox{in} & \;B \\
u,\, v &=& 0\; &\mbox{on} & \partial B.
\end{array}
\right.
\end{align}
where $r+s=p$, for $2<p<2^*$, $r,s>1$, $N \geq 1$. Since the involved functions are not necessarily differentiable, our results provide (new) radial symmetry for positive solutions of \eqref{eq.terracini.geral} in the harmonious case, that is, with $\beta<0$.
On the other side, in a competitive scenario, with $\beta >0$, cooperativity is lost and our results do not apply. More than that, symmetry breaking occurs, see \cite[Remark 5.4]{TWquebra}, due to the particular segregation phenomenon in the limit $\beta\rightarrow +\infty$ described in \cite{Conti1, Conti2}.

\medskip

Now we consider the following Lane-Emden type system involving Pucci's operators
\begin{align} \label{LE pucci} 
\left\{
\begin{array}{rclcc}
\mathcal{M}_1\, u +v^q &=& 0, &\mbox{in} & \;B \\
\mathcal{M}_2 \, v +u^p &=& 0, &\mbox{in} & \;B \\
u,\, v &>& 0 &\mbox{in} & \;B \\
u,\, v &=& 0 &\mbox{on} & \partial B
\end{array}
\right.
\end{align}
where $p,q>0$, $pq\neq 1$, $\mathcal{M}_i$ can be either $\mathcal{M}^+_{\alpha_i,\beta_i}$ or $\mathcal{M}^-_{\alpha_i,\beta_i}$ in a ball.

\smallskip
In a lot of cases, a Pucci's extremal operator can feature the essence of the Laplacian.
For example, for a single equation (that is, \eqref{LE pucci} with $u=v$, $\mathcal{M}_1=\mathcal{M}_2$, $p=q$), existence of a unique classical solution was extended to Pucci's operator in \cite{Quaas2004} with locally Lipschitz nonlinearities, while existence in the sublinear case was first established in \cite{BQnonproper}. 

However, criticality relations are now given in terms of the ellipticity coefficients. Those are related to critical exponents in Liouville type results, but not completely understood even in the scalar case; see \cite{CutriLeoni, FQ2004, BQnonproper}. 
Moreover, they can be much more complicated in the case of a system.
As far as existence is concerned, define $\rho_i=(\alpha_i/\beta_i)\pm 1$ and $N_i=\rho_i (N-1)+1$, $i=1,2$.
For instance, in \cite{BQind} it was proved that there exists a positive classical solution of \eqref{LE pucci} if $p,q\geq 1$, $pq> 1$, and 
$2(p+1)/(pq -1) \geq N_1 -2$ or $2(q+1)/(pq-1) \geq  N_2 -2$, in a smooth bounded domain.

On the other hand, it is known that, in many cases, a radialization of the problem can greatly simplify the operators, specially if we are dealing with Pucci's operators. When radial assumptions on the domain and on the solutions are imposed, sometimes it is possible to go much further; see for instance \cite{FQ2004, GLPradial} and references therein. 

We stress that, regardless whether or not solutions exist, in this work we are concerned just with their radial symmetry -- in the sense that, if a solution exists then it is radial. 
In this direction, our Theorem \ref{Tbola} provides radial symmetry to solutions of \eqref{LE pucci}. In addition, it says that we can focus on establishing the unknown properties for positive solutions that are radial in nature.

\medskip

Finally, we discuss some properties of solutions of the following family of problems
\begin{align}\label{Plambda} \tag{$P_\lambda$}
\left\{
\begin{array}{rclcc}
-F(D^2u) &=&\lambda c(r)u+\mu (r) |D u|^2 +h(r) &\mbox{in} & B  \\
u&=& 0 &\mbox{on} & \partial B,
\end{array}
\right.
\end{align}
where $r = |x|$, $h\in L^\infty (\Omega)$, $F$ satisfies $(H_0)$, $(H_1)$, with $c\gneqq 0$, $\mu(r)\in [\mu_1,\mu_2]$, for some $\mu_1,\mu_2>0$. 

This class of problems is important in applications, more recently in control theory and mean field games. Moreover, it is of theoretical independent interest since this type of gradient dependence is invariant                                                                                                                                                                                           under diffeomorphic changes of variable and function, namely $x$ and $u$. 
In particular, symmetry properties might play an important role in the qualitative analysis of the set of solutions.

Now, assume that $c(r), \mu(r), h(r)$ are continuous functions in $r$ in order to treat the problem in the $C$-viscosity sense from Definition \ref{def Lp-viscosity sol}. Observe that viscosity solutions of \eqref{Plambda} belong to $C^{1,\alpha}(\overline{\Omega})$ due to \cite[Theorem 1.1]{regularidade}. Further, we have from \cite{multiplicidade} the following multiplicity result. Recall that strong solutions belong to some Sobolev space $W^{2,p}$ and satisfy the equation a.e. Here $p>N$ and so $W^{2,p}(\Omega)\subset C^1(\overline{\Omega})$.

\begin{prop} \label{th1.4}
Under the preceding hypotheses, assume that the problem $(P_0)$ has a strong solution $u_0 \geq 0$, with $c u_0\gneqq 0$.
Then every nonnegative viscosity solution of \eqref{Plambda} with $\lambda>0$ satisfies $u> u_0$. Moreover, there exists  $\bar{\lambda}\in (0,+\infty)$ such that
for every $\lambda\in (0,\bar{\lambda})$, the problem \eqref{Plambda} has at least two nontrivial strong solutions with $ u_{\lambda, 1} \le u_{\lambda , 2}\,$, where $u_0 < u_{\lambda_1 ,1}< u_{\lambda_2 ,1} $ if $\,0<\lambda_1<\lambda_2\,$,
$u_{\lambda , 1} \rightarrow u_0$ in $C^1(\overline{\Omega})$, and $\max_{\overline{\Omega}} u_{\lambda , 2} \rightarrow +\infty$ as ${\lambda\rightarrow 0^+}$.
The problem $(P_{\bar{\lambda}})$ has a unique strong solution; and for $\lambda > \bar{\lambda}$, the problem \eqref{Plambda} has no nonnegative solution.
\end{prop}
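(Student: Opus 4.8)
The plan is to run the Ambrosetti--Brezis--Cerami scheme for \eqref{Plambda}, absorbing the quadratic gradient term through the $C^{1,\alpha}$ a priori estimates of \cite{regularidade} and, where convenient, reducing to a one--dimensional problem via the radial symmetry of positive solutions. I would first prove that every nonnegative solution $u$ of \eqref{Plambda} with $\lambda>0$ satisfies $u>u_0$. Since $\lambda c\,u\ge 0$, such a $u$ is a supersolution of $(P_0)$ whereas $u_0$ is its solution and both vanish on $\partial B$, so the comparison principle for $(P_0)$---a coercive, zeroth--order--free equation with quadratic gradient growth---gives $u\ge u_0$. Setting $w=u-u_0$ and using $\mu(r)(|Du|^2-|Du_0|^2)=\mu(r)(Du+Du_0)\cdot Dw$ together with $(H_1)$, the nonnegative function $w$ satisfies a linear differential inequality with bounded first--order coefficients and no zeroth--order term; SMP then forces $w\equiv 0$ or $w>0$ in $B$, and the first alternative would give $\lambda c\,u_0\equiv 0$, contradicting $c\,u_0\gneqq 0$. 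Hence $u>u_0$ in $B$, and Hopf gives $\partial_\nu(u-u_0)>0$ on $\partial B$; in particular any such $u$ is strictly positive, hence radially symmetric and strictly decreasing by Theorem \ref{Tvisc} (recall that viscosity solutions of \eqref{Plambda} lie in $C^{1,\alpha}(\overline{B})$), so one may equivalently analyze the associated ODE.

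Next I would let $\bar\lambda$ be the supremum of the $\lambda>0$ for which \eqref{Plambda} possesses a nonnegative supersolution. Finiteness follows by pairing the equation with the principal eigenfunction of $-F$ relative to the weight $c$: since $c\gneqq 0$, for $\lambda$ large the linear term outpaces the operator and no supersolution can exist. If \eqref{Plambda} has a nonnegative solution for some $\lambda'$, say $u'$, then for every $\lambda<\lambda'$ the function $u'$ is a supersolution and $u_0<u'$ a subsolution of \eqref{Plambda}, so the ordered sub/supersolution method---its monotone iteration being rendered compact by the $C^{1,\alpha}$ bounds of \cite{regularidade}---produces a solution, and its minimal element in $[u_0,u']$ is $u_{\lambda,1}$. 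Comparing minimal solutions for $\lambda_1<\lambda_2$ and invoking SMP gives $u_0<u_{\lambda_1,1}<u_{\lambda_2,1}$, while monotonicity in $\lambda$, the a priori estimates, and the uniqueness of the nonnegative solution of $(P_0)$ force $u_{\lambda,1}\to u_0$ in $C^1(\overline{B})$ as $\lambda\to 0^+$. Passing to the limit along $u_{\lambda,1}$ as $\lambda\uparrow\bar\lambda$ yields a solution of $(P_{\bar\lambda})$, whose uniqueness follows from the classical stability argument for the extremal solution, and for $\lambda>\bar\lambda$ no nonnegative solution exists by the definition of $\bar\lambda$.

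For $\lambda\in(0,\bar\lambda)$ the problem is non-variational, so I would obtain a second solution $u_{\lambda,2}\ge u_{\lambda,1}$ by a topological degree argument: using the $C^{1,\alpha}$ a priori bounds to confine the solutions of a suitable truncation of \eqref{Plambda} to a bounded subset of $C^1(\overline{B})$, one checks that the degree of the solution map over a large ball fails to match its local index at the strictly stable solution $u_{\lambda,1}$, which produces a second solution lying above $u_{\lambda,1}$. A uniform bound on $\max_{\overline{\Omega}}u_{\lambda,2}$ as $\lambda\to 0^+$ would, by compactness, give a nonnegative solution of $(P_0)$ strictly above $u_0$, contradicting both the uniqueness for $(P_0)$ and the minimality of $u_0$; hence $\max_{\overline{\Omega}}u_{\lambda,2}\to+\infty$.

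The hard part will be the quadratic gradient term. It destroys the variational structure, which is why the second solution is extracted by degree theory rather than by a mountain pass, and it forces one to lean on the sharp $C^{1,\alpha}$ estimates of \cite{regularidade} at every stage: in the comparison principle, in the monotone iteration, in the limit $\lambda\uparrow\bar\lambda$, and in the a priori bound underlying the degree computation. Securing these estimates uniformly along the minimal branch and up to $\lambda=\bar\lambda$, in the fully nonlinear and non-homogeneous setting, is the delicate point; with them at hand the remaining steps are the classical ones.
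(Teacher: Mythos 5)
This proposition is not proved in the paper: it is quoted from \cite{multiplicidade}, and the only original contribution of the paper at this point is the sentence after the statement that, by Theorem~\ref{Tvisc}, the continuum of solutions consists of radial, strictly decreasing functions whenever $c,\mu,h$ are nonincreasing in $r$. There is therefore no in-paper argument for your sketch to be measured against.

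Taken on its own, your outline follows the right family of ideas (sub/supersolution plus degree theory), but it has genuine gaps at exactly the points that make the result from \cite{multiplicidade} nontrivial. First and most importantly, the $C^{1,\alpha}$ estimates of \cite{regularidade} are conditional on an $L^\infty$ bound and do not produce one; obtaining a uniform $L^\infty$ a priori bound for nonnegative solutions of \eqref{Plambda} in the presence of the non-proper term $\lambda c u$ together with a coercive quadratic gradient term $\mu(r)|Du|^2$ (which does not scale away under blow-up) is the central technical achievement of \cite{multiplicidade}, and without it neither the degree argument for $u_{\lambda,2}$ nor the limiting arguments along the minimal branch close. Second, for $\max_{\overline{\Omega}} u_{\lambda,2} \to +\infty$ you appeal to uniqueness of the nonnegative solution of $(P_0)$, which is not among the stated hypotheses; a bounded subsequence of $u_{\lambda,2}$ would merely converge to some nonnegative solution of $(P_0)$ lying above $u_0$, which on the face of it is not a contradiction. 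Third, phrases such as the ``classical stability argument for the extremal solution'' and ``pairing the equation with the principal eigenfunction'' presuppose variational or divergence structure; in the present fully nonlinear, non-variational, quadratic-gradient setting these steps need to be replaced by the half-eigenvalue machinery for the linearized Pucci operators, which is precisely what \cite{multiplicidade} develops and is not a routine adaptation.
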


\begin{figure}[!htb]
\centering
\includegraphics[scale=0.18]{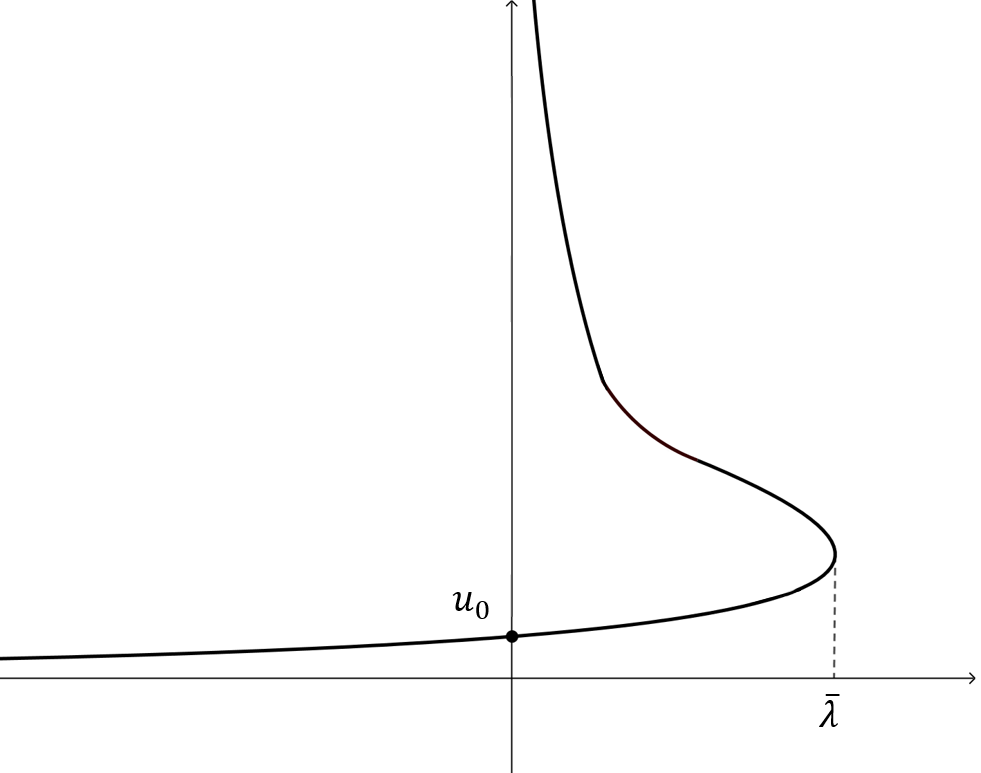}
\caption{ \small{Illustration of Proposition \ref{th1.4}.}}
\label{Rotulo2}
\end{figure}

Thus, the continuum of solutions illustrated in Figure 1 consists of radial and strictly decreasing functions by Theorem \ref{Tvisc}, provided $c,\mu,h$ are nonincreasing with the radius. 

\medskip

\textbf{{Acknowledgment.}} Ederson Moreira dos Santos was partially supported by CNPq grant 307358/2015-1, and Gabrielle Nornberg was supported by FAPESP grant 2018/04000-9.

\end{document}